\RequirePackage{etex}
\documentclass[graybox]{svmult}

\usepackage{type1cm}
\usepackage{microtype}

\usepackage{makeidx}         
\usepackage{graphicx}        
\usepackage{multicol}        
\usepackage[bottom]{footmisc}

\usepackage{nicefrac}
\usepackage{siunitx}
\usepackage{url}
\usepackage{booktabs}
\usepackage{multirow}

\usepackage{listings}
\usepackage[dvipsnames]{xcolor}
\definecolor{mygreen}{RGB}{28,172,0} 
\definecolor{mylilas}{RGB}{170,55,241}
\definecolor{matlabblue}{HTML}{0E07FF}
\definecolor{mygray}{rgb}{0.5,0.5,0.5}
\colorlet{codebg}{lightgray!20!white} %

\definecolor{brilliantrose}{rgb}{1.0, 0.33, 0.64}
\definecolor{myviolet}{rgb}{0.21, 0.0, 0.85}
\definecolor{amethyst}{rgb}{0.6, 0.4, 0.8}
\definecolor{carrotorange}{rgb}{0.93, 0.57, 0.13}

\usepackage[normalem]{ulem}

\usepackage{tikz,pgfplots}
\usetikzlibrary{matrix}
\usepackage{subfig}
\pgfplotsset{compat=1.18}
\usepackage[export]{adjustbox}
\usepackage{tikz-network}

\usepackage{todonotes}
\usepackage{comment}

\usepackage{newtxtext}       
\usepackage[varvw]{newtxmath}       

\usepackage[colorlinks]{hyperref}

\usepackage[linesnumbered,lined,boxed,commentsnumbered]{algorithm2e}

\makeindex

\begin{document}

\title*{Nearest Reversible Markov Chains with Sparsity Constraints: An Optimization Approach}

\titlerunning{Nearest Reversible Sparse Markov Chains}

\author{Stefano Cipolla\orcidID{0000-0002-8000-4719} and\\% 
Fabio Durastante\orcidID{0000-0002-1412-8289} and\\%
Miryam Gnazzo\orcidID{0000-0002-4381-5766} and\\%
Beatrice Meini\orcidID{0000-0003-3139-2513}}%

\authorrunning{S. Cipolla, F. Durastante, M. Gnazzo, B. Meini}

\institute{%
Stefano Cipolla \at School of Mathematical Sciences, University of Southampton, Building 54, Mathematical Sciences, Highfield, Southampton, SO17 1BJ, UK. \email{s.cipolla@soton.ac.uk}%
\and Fabio Durastante \at Dipartimento di Matematica, Università di Pisa, Largo Bruno Pontecorvo, 5 - 56127 Pisa, Italy  \email{fabio.durastante@unipi.it}%
\and Miryam Gnazzo \at Istituto di Scienza e Tecnologie dell'Informazione ``Alessandro Faedo'', CNR, Via G. Moruzzi, 1 - 56124 Pisa, Italy. \email{miryam.gnazzo@isti.cnr.it}%
\and Beatrice Meini \at Dipartimento di Matematica, Università di Pisa, Largo Bruno Pontecorvo, 5 - 56127 Pisa, Italy.\email{beatrice.meini@unipi.it}%
}

\maketitle

\abstract*{Reversibility is a key property of Markov chains, central to algorithms such as Metropolis--Hastings and other MCMC methods. Yet many applications yield non-reversible chains, motivating the problem of approximating them by reversible ones with minimal modification. We formulate this task as a matrix nearness problem and focus on the practically relevant case of sparse transition matrices. The resulting optimization problem is a quadratic programming problem, and numerical experiments illustrate the effectiveness of the approach. This framework provides a principled way to enforce reversibility and sparsity patterns in Markov chains with applications in MCMC, computational chemistry, and data-driven modeling.}
\abstract{Reversibility is a key property of Markov chains, central to algorithms such as Metropolis--Hastings and other MCMC methods. Yet many applications yield non-reversible chains, motivating the problem of approximating them by reversible ones with minimal modification. We formulate this task as a matrix nearness problem and focus on the practically relevant case of sparse transition matrices. The resulting optimization problem is a quadratic programming problem, and numerical experiments illustrate the effectiveness of the approach. This framework provides a principled way to enforce reversibility and sparsity patterns in Markov chains with applications in MCMC, computational chemistry, and data-driven modeling.}

\section{Introduction}

Markov chains~\cite{MR410929} are a fundamental tool in probability, statistics, and computational science. Their structural properties, i.e., the structural properties of their matrix representation, such as irreducibility, stationarity, or reversibility, play a role in both theoretical analysis and the construction of efficient algorithms. In particular, \emph{reversibility} is especially important since it forms the objective of widely used algorithms such as Metropolis--Hastings~\cite{10.1063/1.1699114,10.1093/biomet/57.1.97} and its more modern variants in Markov Chain Monte Carlo~\cite{MR1397966} (MCMC).

However, in many settings of interest, one starts from a given Markov chain that is \emph{not} reversible. This raises the natural question of how a given Markov chain can be modified as little as possible to yield a reversible one. Indeed, this question has been addressed in~\cite{MR3338930} for the case of a general irreducible Markov chain by means of a quadratic programming approach, and in~\cite{durastante2025riemannianoptimizationapproachfinding} via a Riemannian optimization approach~\cite{MR4533407} on a suitably defined manifold based on the multinomial manifold~\cite{8861409}. 

It is important to note that this problem is indeed motivated by applications in numerical linear algebra, where reversible chains often yield better spectral properties~\cite{Wolfer2021393}; in MCMC, where reversibility ensures convergence to the target distribution; and in data-driven modeling, where observed transition data may be noisy and fail to satisfy the reversibility condition~\cite{Trendelkamp}.

Formally, let $P \in \mathbb{R}^{n \times n}$ denote the transition matrix of a discrete, homogeneous Markov chain. The goal is to construct a reversible chain that is ``closest'' to $P$, in the sense of requiring the smallest possible perturbation to enforce reversibility. At the matrix level, this leads to a \emph{matrix nearness problem}~\cite{MR1041063}, of the form
\[
\mathrm{d}(P) = \min \left\{ \|X-P\| \;:\; X \in \mathbb{R}^{n \times n} \ \text{has property } \mathfrak{P} \right\}, %
\]
where $\mathfrak{P}$ denotes a prescribed property. The value $\mathrm{d}(P)$ then measures the minimum-norm perturbation needed to enforce the chosen property in the given matrix norm~$\|\cdot\|$. In the present setting, $\mathfrak{P}$ is the reversibility condition, expressed through the detailed balance equations
\[
\pi_i X_{ij} = \pi_j X_{ji}, \qquad \forall i,j,
\]
with respect to a stationary distribution $\boldsymbol{\pi}$, i.e., $\boldsymbol{\pi}^\top X = \boldsymbol{\pi}^\top$, $\boldsymbol{\pi}^\top \mathbf{1} = 1$, $\boldsymbol{\pi} > 0$,  where $X\ge 0$, $X\mathbf{1}=\mathbf{1}$, and the distance is measured in the Frobenius norm,
\[
\| A\|_F =\operatorname{trace}(A^\top A)^{1/2}.
\]

Previous work~\cite{MR3338930,durastante2025riemannianoptimizationapproachfinding} has focused primarily on the case where $P$ is irreducible, and any pair of states can be connected by a nonzero transition probability. In contrast, we are interested here in the setting where the transition matrix $P$ is \emph{sparse}. This is the typical situation in high-dimensional applications, where each state can only transition to a small number of states. In such cases, it is natural to require that the matrix $X$ obtained at the end preserves as much of the sparsity pattern of $P$ as possible, so that the resulting reversible chain remains computationally efficient to simulate and analyze. %
To ensure reproducibility, the code for all numerical experiments supporting the proposed approach is available at the GitHub repository \href{https://github.com/Cirdans-Home/sparse-reversible}{Cirdans-Home/sparse-reversible}.

The remainder of the paper is organized as follows. In Section~\ref{sec:notation} we introduce the notation and basic definitions that will be used throughout the work. Section~\ref{sec:problem_formulation} formally states the problem of finding the nearest reversible Markov chain, with sparse transition matrix, and discusses feasibility and convexity issues, with particular attention to the role of reversibility constraints. 
In Section~\ref{sec:IPM} we describe our proposed solution method based on a quadratic programming approach. Numerical experiments illustrating the performance and properties of the approach are presented in Section~\ref{sec:numerical_examples}. Finally, Section~\ref{sec:conclusions} summarizes the main findings and outlines directions for future research.

\subsection{Notation}\label{sec:notation}
We begin by establishing notation and recalling the necessary background; we refer the reader to \cite{MR410929} for a comprehensive introduction to Markov chains, and to \cite{MR1298430} for properties of nonnegative matrices.

A matrix $A\in\mathbb{R}^{m\times n}$ is said to be \emph{nonnegative} (\emph{positive}) if $A_{ij}\ge 0$ ($A_{ij}>0$) for any $i,j$, and will will write $A\ge 0$ ($A>0$).

\begin{definition}[Discrete-time homogeneous Markov chain]
A \emph{discrete-time homogeneous Markov chain} on a finite state space $\{1,\dots,n\}$ is a stochastic process $\{X_k\}_{k \geq 0}$ such that
\[
\mathbb{P}(X_{k+1}=j \mid X_k=i, X_{k-1},\dots,X_0) \;=\; \mathbb{P}(X_{k+1}=j \mid X_k=i) \;=\;P_{ij}, 
\]
for all $i,j \in \{1,\dots,n\}$. The transition probabilities $P_{ij}$ form the entries of a matrix $P=(P_{ij})$, called the \emph{transition matrix}. The homogeneity assumption means that $P$ does not depend on $k$.
\end{definition}

\begin{definition}[Stochastic matrices]
A \emph{stochastic} matrix $P\in \mathbb{R}^{n\times n}$ is a matrix such that
\[
P\mathbf{1}=\mathbf{1}, \; P \geq 0,
\]
where $\mathbf{1}= \left( 1,\ldots, 1 \right)^\top$.
The transition matrices of finite-state Markov chains are stochastic matrices.
\end{definition}

\begin{definition}[Stationary distribution]\label{def:stationary_distribution}
Let $P \in \mathbb{R}^{n\times n}$ be the transition matrix of a Markov chain on $\{1,\dots,n\}$.  
A probability vector $\boldsymbol{\pi}\in\mathbb{R}^n$ with $\boldsymbol{\pi} \geq 0$, $\boldsymbol{\pi}^\top \mathbf{1}=1$, is called a \emph{stationary distribution} for $P$ if 
\[
\boldsymbol{\pi}^\top P = \boldsymbol{\pi}^\top.
\]
If the chain is \emph{irreducible}, i.e., if for every pair of states $(i,j)$ there exists a $k\geq 0$ such that $[P^k]_{ij}>0$, then the Perron--Frobenius theorem \cite[\S 8, p.667]{MR1777382} implies that there exists a unique stationary distribution $\boldsymbol{\pi}$, the entries of $\boldsymbol{\pi}$ are strictly positive, i.e., $\pi_i > 0$ for all $i$, and that $\boldsymbol{\pi}$ is the unique probability vector that is a left eigenvector of $P$ associated with the eigenvalue $1$.
\end{definition}

\begin{definition}[Reversibility]\label{def:reversibility}
A Markov chain with irreducible transition matrix $P$ is said to be \emph{reversible with respect to a stationary distribution $\boldsymbol{\pi}$} if the \emph{detailed balance equations} hold:
\begin{equation}
\label{eq:detaied_balance}
\pi_i P_{ij} = \pi_j P_{ji}, \qquad \forall\, i,j.
\end{equation}
Equivalently, reversibility means that the probability flux between any two states is symmetric under $\boldsymbol{\pi}$.
\end{definition}

Consider a reversible Markov chain, with stationary distribution $\boldsymbol{\pi}$. Then, we may express the detailed balance equations in \eqref{eq:detaied_balance} in terms of the transition matrix $P$ of the Markov chain. Indeed, this translates into:
\[
D_{\boldsymbol{\pi}} P = P^{\top} D_{\boldsymbol{\pi}},
\]
where $D_{\boldsymbol{\pi}}= \operatorname{diag}\left(\pi_1,\ldots, \pi_n\right)$. Note that, by performing a similarity transformation with the diagonal matrix $D_{\boldsymbol{\hat\pi}}= \operatorname{diag}(\boldsymbol{\hat\pi})$, where $\hat{\boldsymbol{\pi}} = \boldsymbol{\pi}^{\nicefrac{1}{2}}$ entrywise, we get that
\[
D_{\boldsymbol{\pi}} P = P^\top D_{\boldsymbol{\pi}} \,\Leftrightarrow\, D_{\hat{\boldsymbol{\pi}}} P D_{\hat{\boldsymbol{\pi}}}^{-1} = D_{\hat{\boldsymbol{\pi}}}^{-1} P^\top D_{\hat{\boldsymbol{\pi}}} =  (D_{\hat{\boldsymbol{\pi}}} P D_{\hat{\boldsymbol{\pi}}}^{-1})^\top.
\]
Moreover, the stochasticity of $P$ leads to the additional property:
\[
D_{\hat{\boldsymbol{\pi}}} P D_{\hat{\boldsymbol{\pi}}}^{-1}\hat{\boldsymbol{\pi}} =  D_{\hat{\boldsymbol{\pi}}} P \mathbf{1} = D_{\hat{\boldsymbol{\pi}}} \mathbf{1} = \hat{\boldsymbol{\pi}}.
\]

The above motivates the introduction of the following set. Given a positive probability vector $\boldsymbol{\pi}$ we define
\begin{equation}
\label{def:symmetric_reversible}
    \mathcal{R}_{\boldsymbol{\pi}} \triangleq \{S \in \mathbb{R}^{n \times n} \;: \, S \geq 0, \; S=S^\top, \; S\hat{\boldsymbol{\pi}} = \hat{\boldsymbol{\pi}}, \; \hat{\boldsymbol{\pi}} = \boldsymbol{\pi}^{\nicefrac{1}{2}}\},
\end{equation}
which collects symmetric  non negative  matrices, with fixed eigenvector $\hat{\boldsymbol{\pi}}$ associated with eigenvalue $1$.

Since our main interest lies in Markov chains where each state can transition only to a limited number of other states---that is, chains whose transition matrix contains a number of nonzero entries proportional to $n$ rather than $n^2$---we introduce the following notation to describe and work with their sparsity pattern.

\begin{definition}[Sparsity pattern]
For any matrix $M \in \left\lbrace 0,1 \right\rbrace^{n \times n}$ we define its sparsity pattern as
\[
\Omega_M = \{(i,j) \;:\; 1 \leq i,j \leq n, \; M_{ij} \neq 0 \}, 
\qquad s_{M} = |\Omega_M|,
\]
and denote by
\[
\mathbb{S}(M) = \{ B \in \mathbb{R}^{n \times n} \;:\; B_{ij} \neq 0 \implies (i,j) \in \Omega_M \}
\]
the set of matrices that share the same sparsity pattern as $M$.
\end{definition}
In this work, we refer to Markov chains whose transition matrix $P$ is sparse as \emph{sparse Markov chains}.

{We end this section fixing a few additional notations, we denote by ``$\circ$'' the entry-wise or Hadamard product of two matrices. We denote by $\operatorname{triu}(\cdot)$ the operation of extracting the upper triangular part of a matrix.}

\section{Formulation of the optimization problem}\label{sec:problem_formulation}

Consider a given Markov chain with transition matrix $P \in \mathbb{R}^{n\times n}$, a positive probability vector $\boldsymbol{\pi}\in \mathbb{R}^n$ and a given irreducible matrix $M\in \left\lbrace 0,1\right\rbrace^{n\times n}$, which defines the sparsity pattern $\Omega_M$. We assume that $M$ is symmetric and $M_{ii}=1$ for any $i=1,\ldots,n$.

We are looking for a transition matrix $X \in \mathbb{R}^{n\times n}$, which is close to the original matrix $P$, and is reversible with respect to the probability distribution $\boldsymbol{\pi}$. Additionally, we require that the matrix $X$ has the prescribed sparsity pattern $\Omega_M$. Specifically, we seek a matrix $X \in \mathbb{R}^{n\times n}$ such that
\begin{equation}
\label{eq:initial const}
   X \mathbf{1} = \mathbf{1}, \; D_{\boldsymbol{\pi}} X= X^{\top} D_{\boldsymbol{\pi}},  \;X\geq 0, \; X\in \mathbb{S}(M).
\end{equation}
This translates into solving the optimization problem
\begin{equation} \label{eq:original-rever}
\begin{array}{rl}
         \displaystyle \min_{X \in \mathbb{R}^{n\times n} } & \displaystyle \frac{1}{2}\|X-P\|_F^2,  \\ %
        \text{s.t.} &  X \text{ satisfies the relations in \eqref{eq:initial const}}.
    \end{array}
\end{equation}
The term $\| X-P\|_F^2$ measures the distance between the original matrix $P$ and the final reversible one $X$. Consequently, this choice of the cost function drives the optimization procedure toward the reversible matrix closest to the original one, with respect to the distance induced by the Frobenius norm. {The choice of the standard Frobenius norm is motivated also by statistical considerations. The Frobenius norm represents the standard Euclidean distance in the vectorized space $\mathbb{R}^{n^2}$; minimizing the squared Frobenius norm is therefore equivalent to an element-wise least-squares criterion, which provides a maximum likelihood estimator, whenever we interpret $P$ as a noisy observation of $X$ and assume the noise being independent and identically distributed. 
We note, however, that a significant portion of our theoretical and algorithmic framework we discuss in the following can be extended to other distance measures, provided they retain convexity and regularity. For instance, incorporating a weighted Frobenius norm is straightforward and preserves both the quadratic structure and the efficient, closed-form projections. Extensions to the $\ell_1$ or $\ell_\infty$ norms also maintain convexity—allowing the problem to be solved via specialized splitting methods.}

As a first step, rather than working directly with the set of reversible stochastic matrices sharing the stationary distribution $\boldsymbol{\pi}$, we reformulate the problem in terms of the set $\mathcal{R}_{\boldsymbol{\pi}}$ defined in \eqref{def:symmetric_reversible}. As a consequence, we may perform the minimization in~\eqref{eq:original-rever} choosing as feasible set the matrices in $\mathcal{R}_{\boldsymbol{\pi}}$ with pattern $\Omega_M$.

This can be achieved performing the change of variable $Y=  D_{\hat{\boldsymbol{\pi}}}X D_{\hat{\boldsymbol{\pi}}}^{-1}$. Indeed, since $X$ is reversible with pattern $\Omega_M$, we conclude that $Y\in\mathcal{R}_{\boldsymbol{\pi}} \cap \mathbb{S}(M)$.

Then, employing this change of variables both in the objective function and in the feasible set, we construct the following optimization problem
in the variable $Y$:
\begin{equation} \label{eq:original}
\begin{array}{rl}
         \displaystyle \min_{Y \in \mathbb{R}^{n\times n} } & \displaystyle \mathrm{J}(Y) :=\frac{1}{2} \|D_{\hat{\boldsymbol{\pi}}}^{-1}Y D_{\hat{\boldsymbol{\pi}}} - P\|_F^2,  \\
        \text{s.t.} &  Y \in\mathcal{R}_{\boldsymbol{\pi}} \cap \mathbb{S}(M).
    \end{array}
\end{equation}

\begin{remark}\label{rmk:irreducible_and_reversible}
Observe that, by Definition~\ref{def:reversibility}, any reversible matrix $P$ with stationary vector $\boldsymbol{\pi}$ can be rescaled, via equivalence with the matrix $D_{\hat{\boldsymbol{\pi}}}$, so as to obtain an element of the set $\mathcal{R}_{\boldsymbol{\pi}}$ defined in~\eqref{def:symmetric_reversible}. 
Conversely, 
any matrix in the set $\mathcal{R}_{\boldsymbol{\pi}}$ in~\eqref{def:symmetric_reversible} is such that
the matrix $P=D_{\hat{\boldsymbol{\pi}}}^{-1} S  D_{\hat{\boldsymbol{\pi}}}$ is stochastic and satisfies 
the detailed balance relation 
\[
D_{\boldsymbol{\pi}} P = P^\top D_{\boldsymbol{\pi}}.
\]
However, $P$ might be reducible, since $S$ is not assumed to be irreducible, therefore $P$ 
is not in general reversible according to Definition~\ref{def:reversibility}. 

Observe that the set of matrices in $\mathcal{R}_{\boldsymbol{\pi}}$ that are irreducible is not a closed set. For example, consider the succession $R_{\varepsilon}\in \mathbb{R}^{n\times n}$ of stochastic reversible matrices in the~form
\[
R_{\varepsilon} = (1-\varepsilon)I + \frac{\varepsilon}{n} \mathbf{1} \mathbf{1}^T, \quad 0 < \varepsilon \leq 1,
\]
with probability distribution $\boldsymbol{\pi} = \nicefrac{\mathbf{1}}{n}$. For each $0 < \varepsilon \leq 1,$ the matrix $R_{\varepsilon}$ is irreducible. Observe that $R_{\varepsilon} \rightarrow I$, as $\varepsilon\rightarrow 0$. While both $R_{\varepsilon}$ and $I$ belong to the set $\mathcal{R}_{\boldsymbol{\pi}}$, we have that the limit $I$ is not irreducible. Therefore, to have a feasible closed set, the matrices in the set  $\mathcal{R}_{\boldsymbol{\pi}}$ are not required to be irreducible.
\end{remark}

Before solving the optimization problem in~\eqref{eq:original}, we prove that, %
given a probability vector $\boldsymbol{\pi}>0$ and an irreducible and symmetric sparse matrix $M\in \left\lbrace 0,1 \right\rbrace^{n\times n}$, such that $M_{ii}=1$ for any $i=1,\ldots,n$, the feasible set
\[
\left\lbrace Y \in \mathbb{R}^{n \times n}: Y \in\mathcal{R}_{\boldsymbol{\pi}}, \; Y \in \mathbb{S}(M)  \right\rbrace
\]
is convex and non empty. 

{
\begin{remark}\label{rmk:miqp_formulation}
We observe that in principle we could try to optimize also for the position of the nonzero entries by modifying the constraints, e.g., by constraining the number of nonzero entries per row of the pattern. Recalling that, given a maximum row-degree vector $\mathbf{d} \in \mathbb{N}^n$, this problem can be formulated as
\[
\begin{split}
\min_{X,M} & \frac{1}{2}\| X - P \|_F^2, \\
& \begin{array}{ll}
\text{s.t.}& X \mathbf{1} = \mathbf{1},\\
& D_{\boldsymbol{\pi}} X = X^\top D_{\boldsymbol{\pi}},\\
& 0 \leq X \leq M, \quad M \mathbf{1} \leq \mathbf{d}, \quad M \in \{0,1\}^{n \times n},\\
& M = M^\top, \\
\end{array}
\end{split}
\]   
this simultaneous optimization of both the transition probabilities $X$ and the topology $M$ gives a Mixed-Integer Quadratic Program. Due to the combinatorial nature of the binary constraints on $M$, the problem becomes non-convex and NP-hard~\cite{MR3612939}, which contrasts with the tractability of the fixed-sparsity variant we consider here.
\end{remark}}

\subsection{Constraint feasibility and convexity}\label{sec:feasible_and_convex}

{Suppose we are given a probability distribution $\boldsymbol{\pi}>0$ and a stochastic matrix $Q$ possibly not satisfying $\boldsymbol{\pi}^\top Q = \boldsymbol{\pi}^\top$. We wish to construct a stochastic matrix $T$ such that $D_{\boldsymbol{\pi}} T = T^{\top} D_{\boldsymbol{\pi}}$, where $T$ is obtained by modifying only the nonzero and the diagonal entries of $Q$. A standard approach~\cite{Choi_Wolfer} is to construct a \emph{proposal transition matrix} obtained by correcting the dynamics of $Q$ by introducing an acceptance mechanism,}
yielding a new transition matrix 
$T = (T_{ij})_{i,j \in \{1,\ldots,n\}}$ defined by
\[
T_{ij} \;=\; Q_{ij} \, \alpha_{ij}, \qquad i \neq j,
\]
where $\alpha_{ij} \in [0,1]$, for $i,j=1,\ldots,n$ is an acceptance probability. Finally, the diagonal entries are adjusted so that each row sums to one:
\[
T_{ii} \;=\; 1 - \sum_{j \neq i} T_{ij}.
\]
Observe that, since $\alpha_{ij}\in[0,1]$ and $Q$ is a stochastic matrix, we have that for $i \neq j$
\[
T_{ij} = Q_{ij}\,\alpha_{ij} \leq Q_{ij},
\]
hence
\[
\sum_{j \neq i} T_{ij} \;\leq\; \sum_{j \neq i} Q_{ij} \;=\; 1 - Q_{ii}.
\]
Therefore,
\[
T_{ii} \;=\; 1 - \sum_{j \neq i} T_{ij}
\;\;\geq\;\; 1 - (1 - Q_{ii})
\;=\; Q_{ii} \;\geq\; 0.
\]
Thus every diagonal entry $T_{ii}$ is nonnegative, and the acceptance mechanism preserves the nonnegativity of the matrix. The acceptance rule $\alpha_{ij}$ is then chosen to enforce the detailed balance condition
\begin{equation}\label{eq:reversibilization}
    \pi_i Q_{ij} \alpha_{ij} \;=\; \pi_j Q_{ji} \alpha_{ji}, \qquad \forall i,j \in \{1,\ldots,n\}.
\end{equation}
By construction, $T$ is reversible with respect to $\boldsymbol{\pi}$. The respect of the detailed balance condition and the row stochasticity of $T$ imply that the constructed matrix $T$ has as stationary distribution the vector $\boldsymbol{\pi}$. As final step, we observe that the sparsity pattern of $Q$ is directly inherited by $T$ plus, possibly, nonzero diagonal entries. Indeed, if $Q_{ij}=0$, then necessarily $T_{ij}=0$, since no transition from $i$ to $j$ can be proposed in the first place. Moreover, the diagonal entries might be nonzero to enforce the stochasticity.

Thus, the choice of $Q$ determines the graph of possible moves, while the acceptance rule only re-scales the allowed transitions and adds at most entries on the diagonal. %

Two classical ways of choosing the acceptance probability $\alpha_{ij}$ are:
\begin{description}[Metropolis]
    \item[\textbf{Metropolis rule (Metropolis--Hastings algorithm)}~\cite{10.1063/1.1699114,10.1093/biomet/57.1.97}:]  
    The acceptance probability is taken to be
    \begin{equation}\label{eq:reversibilization_metropolis}
    \alpha_{ij} \;=\; \min\!\left(1, \; \frac{\pi_j Q_{ji}}{\pi_i Q_{ij}} \right).
    \end{equation}
    \item[\textbf{Barker's algorithm}~\cite{Barker1965}:]
    An alternative acceptance function is
    \[
    \alpha_{ij} \;=\; \frac{\pi_j Q_{ji}}{\pi_i Q_{ij} + \pi_j Q_{ji}}.
    \]
\end{description}
Both constructions enforce reversibility of $T$ with respect to the target distribution $\boldsymbol{\pi}$, in general these are not the only closed-form constructions possible, see, e.g.,~\cite{Choi_Wolfer}.

The constructions above show how one may always build a reversible Markov chain with stationary distribution $\boldsymbol{\pi}$, starting from an arbitrary proposal kernel $Q$. We can also characterize when a given transition kernel $Q$ itself is already reversible with respect to $\boldsymbol{\pi}$ by means of the \emph{Kolmogorov cycle condition}~\cite[Theorem 1.7]{MR554920}. For a Markov chain with transition matrix $T=(T_{ij})$ and stationary distribution $\boldsymbol{\pi}$, reversibility holds if and only if for every finite cycle of distinct states
\[
i_1 \;\to\; i_2 \;\to\; \cdots \;\to\; i_k \;\to\; i_1,
\]
the following equality is satisfied:
\[
T_{i_1 i_2} \, T_{i_2 i_3} \, \cdots \, T_{i_{k-1} i_k} \, T_{i_k i_1}
\;=\;
T_{i_1 i_k} \, T_{i_k i_{k-1}} \, \cdots \, T_{i_3 i_2} \, T_{i_2 i_1}.
\]
Equivalently, one can state the condition in terms of the stationary distribution:
\[
\pi_{i_1} T_{i_1 i_2} \, \pi_{i_2} T_{i_2 i_3} \cdots \pi_{i_k} T_{i_k i_1}
\;=\;
\pi_{i_1} T_{i_1 i_k} \, \pi_{i_k} T_{i_k i_{k-1}} \cdots \pi_{i_2} T_{i_2 i_1}.
\]
When the cycle condition holds, detailed balance is satisfied, and the chain is reversible with respect to $\boldsymbol{\pi}$. If the cycle condition fails for even one cycle, the chain is inherently non-reversible, and one can resort to a construction such as Metropolis--Hastings, Barker or other systematic approaches~\cite{Choi_Wolfer} to enforce reversibility. We stress that the Kolmogorov cycle condition depends only on the transition structure of $T$, hence if $T$ is non-reversible, it \emph{cannot be modified locally} to restore reversibility without changing its transition graph. In contrast, by introducing a proposal kernel $Q$ and an acceptance rule, one can always build a new reversible chain, though the resulting dynamics may differ significantly from those of the original $Q$ and may have added some diagonal entries to the transition matrix.

\begin{remark}\label{rmk:irreducible}
When the Metropolis--Hastings, or equivalently Barker's, procedure is applied to a {transition matrix} with a
non-symmetric sparsity pattern, the resulting transition matrix may have a
different support than the original one. In particular, if $Q_{ij} > 0$ but
$Q_{ji} = 0$, then the detailed balance condition cannot be satisfied, and the
Metropolis--Hastings construction sets both $T_{ij}$ and $T_{ji}$ equal to
zero. As a consequence, all non-reciprocated transitions are eliminated,
leaving only those edges of the graph that appear in both directions. In
graph-theoretic terms, this means that the transition structure after the
Metropolis--Hastings adjustment corresponds to the largest symmetric subgraph
contained in the original directed graph. Consider, e.g., the stochastic $4 \times 4$  matrix
\[
Q = \begin{bmatrix}
0   & 1   & 0   & 0   \\
0   & 0   & 1   & 0   \\
\nicefrac{1}{2} & 0   & 0   & \nicefrac{1}{2} \\
0   & 0   & 1   & 0
\end{bmatrix}.
\]
The corresponding directed graph is shown
in Fig.~\ref{fig:mh_symmetrization_a}. Applying the Metropolis--Hastings
adjustment eliminates the one-way transitions $1 \to 2$ and $2 \to 3$, since
their reciprocals do not exist, but preserves the bidirectional link between
states $3$ and $4$. The resulting adjusted transition matrix has the symmetric
support
\[
{T} = \begin{bmatrix}
1 & 0 & 0 & 0 \\
0 & 1 & 0 & 0 \\
0 & 0 & \nicefrac{1}{2} & \nicefrac{1}{2} \\
0 & 0 & 1 & 0
\end{bmatrix},
\]
and its associated graph is shown in Fig.~\ref{fig:mh_symmetrization_b}. This has the effect of also changing the structure of the associated Markov chain, which can then become reducible as in the example. 
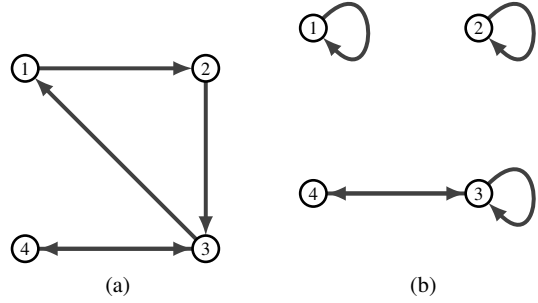
\begin{figure}[htpb]
\sidecaption[t]
\subfloat[\label{fig:mh_symmetrization_a}]{%
\begin{tikzpicture}[scale=1.2]
\Vertex[IdAsLabel=true,size=0.35,color=white]{1}
\Vertex[x=2,y=0,IdAsLabel=true,size=0.35,color=white]{2}
\Vertex[x=2,y=-2,IdAsLabel=true,size=0.35,color=white]{3}
\Vertex[x=0,y=-2,IdAsLabel=true,size=0.35,color=white]{4}
\Edge[Direct](1)(2)
\Edge[Direct](2)(3)
\Edge[Direct](3)(1)
\Edge[Direct](3)(4)
\Edge[Direct](4)(3)
\end{tikzpicture}
}
\hspace{3em}
\subfloat[\label{fig:mh_symmetrization_b}]{%
\begin{tikzpicture}[scale=1.1]
\Vertex[IdAsLabel=true,size=0.35,color=white]{1}
\Vertex[x=2,y=0,IdAsLabel=true,size=0.35,color=white]{2}
\Vertex[x=2,y=-2,IdAsLabel=true,size=0.35,color=white]{3}
\Vertex[x=0,y=-2,IdAsLabel=true,size=0.35,color=white]{4}
\Edge[Direct](1)(1)
\Edge[Direct](2)(2)
\Edge[Direct](3)(3)
\Edge[Direct](3)(4)
\Edge[Direct](4)(3)
\end{tikzpicture}}
\caption{Effect of the Metropolis--Hastings adjustment on a non-symmetric
stochastic matrix. The Markov chain represented by the graph in panel~\ref{fig:mh_symmetrization_a} is irreducible, i.e., the underlying directed graph is strongly connected. On the other hand, its \emph{reversibilization} obtained through the Metropolis--Hastings approach in panel~\ref{fig:mh_symmetrization_b} is reducible, and the corresponding graph is made of three connected components.}
\label{fig:mh_symmetrization}
\end{figure}

\end{remark}

Following the idea in \cite[Proposition 2]{durastante2025kemenysconstantminimizationreversible} and the previous insights, we may prove that the {feasible} set  is not empty.

%

%
%

%
%

\begin{comment}
\begin{proposition}\label{pro:it_is_not_empty}
Let $A \in \mathbb{R}^{n \times n}$ with $A \geq 0$, $A \mathbf{1} = \mathbf{1}$, $\boldsymbol{\pi}^\top A = \boldsymbol{\pi}^\top$ with $\boldsymbol{\pi} > 0$, and $\boldsymbol{\pi}^\top \mathbf{1} = 1$, then the set of matrices $\Delta \in \mathbb{R}^{n \times n}$ with $\Delta \in \mathbb{S}(A+A^\top +I)$ and satisfying
\[
\Delta \mathbf{1} = \mathbf{0}, \qquad D_{\boldsymbol{\pi}}(A + \Delta) = (A + \Delta)^\top D_{\boldsymbol{\pi}}, \qquad A + \Delta \geq 0,
\]
is not empty.
\end{proposition}

\begin{proof}
By either the Metropolis--Hastings or the Barker algorithms we know that there exist stochastic kernels $Q$, and weighting matrix $\alpha$ such that the detailed balance condition for $P = Q \alpha$ hold, i.e., for which we have $P \mathbf{1} = \mathbf{1}, \text{ and } D_{\boldsymbol{\pi}}P = P^\top D_{\boldsymbol{\pi}}$. We then need to show that we can select $Q$ in such a way that
\[
A + \Delta = P = Q \alpha, \quad \Delta \mathbf{1} = \mathbf{0}, \text{ and}\quad \Delta \geq -A. 
\]
Indeed, for whatever choice of $Q$ we find \[\Delta \mathbf{1} = (P - A)\mathbf{1} = P\mathbf{1} - A\mathbf{1} = \mathbf{1} - \mathbf{1} = \mathbf{0}.\]
Hence the problem is reduced to the choice of a $Q$ such that $Q\alpha - A \geq - A$, i.e., $Q \alpha \geq 0$ which is guaranteed for any choice of $Q$.
\end{proof}
\end{comment}

%
%
%
%
%
%
%

%
%
%

\begin{proposition}
\label{prop:it_is_not_empty}
Consider a probability distribution $\boldsymbol{\pi}>0$, and a pattern {induced by} a given irreducible matrix $M\in \left\lbrace 0,1 \right\rbrace^{n\times n}$. Assume that $M$ is symmetric and $M_{ii}=1$ for any $i=1,\ldots,n$. Then, the set of {irreducible} matrices $Y$ such that $Y \in\mathcal{R}_{\boldsymbol{\pi}}$ as in~\eqref{def:symmetric_reversible} and $Y \in \mathbb{S}(M)$ is not empty.
\end{proposition}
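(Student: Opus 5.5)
The plan is an explicit construction along the lines of the Metropolis--Hastings argument of Section~\ref{sec:feasible_and_convex}, carried out directly in the symmetric variable $Y$. Take the off-diagonal part of the pattern, with entries scaled by $\hat{\boldsymbol{\pi}}$, and put the diagonal mass needed to reach the eigenvector condition on the diagonal. Concretely, fix a small parameter $\varepsilon>0$ and define, for $i\neq j$,
\[
Y_{ij} = \varepsilon\, M_{ij}\, \hat\pi_i \hat\pi_j ,
\qquad
Y_{ii} = 1 - \varepsilon \sum_{j\neq i} M_{ij}\, \hat\pi_j^2 = 1-\varepsilon\sum_{j\neq i} M_{ij}\pi_j .
\]
This is the $D_{\hat{\boldsymbol{\pi}}}$-rescaling of the chain $X_{ij}=\varepsilon M_{ij}\pi_j$ for $i\ne j$, which is exactly a Metropolis chain with target $\boldsymbol{\pi}$.

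The membership checks are then routine, and I would carry them out in this order.

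\emph{Symmetry.} For $i\neq j$ we have $Y_{ij}=Y_{ji}$, because $M$ is symmetric and the product $\hat\pi_i\hat\pi_j$ is symmetric in $i,j$.

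\emph{Sparsity.} $Y\in\mathbb{S}(M)$, since off-diagonal nonzeros occur only where $M_{ij}=1$, and the diagonal is allowed because $M_{ii}=1$.

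\emph{Eigenvector condition.} For each $i$,
\[
(Y\hat{\boldsymbol{\pi}})_i = Y_{ii}\hat\pi_i + \varepsilon\sum_{j\neq i} M_{ij}\hat\pi_i\hat\pi_j^2 = \hat\pi_i\Bigl(Y_{ii}+\varepsilon\sum_{j\ne i}M_{ij}\pi_j\Bigr)=\hat\pi_i .
\]

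\emph{Nonnegativity.} The off-diagonal entries are nonnegative by construction. The diagonal entries satisfy $Y_{ii}\ge 1-\varepsilon\sum_j\pi_j = 1-\varepsilon\ge 0$ whenever $\varepsilon\le 1$, using $\boldsymbol{\pi}^\top\mathbf{1}=1$. Choosing $\varepsilon\in(0,1)$ even gives $Y_{ii}>0$.

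\emph{Irreducibility.} This is the only step with real content. For $i\neq j$, $Y_{ij}>0$ holds exactly when $M_{ij}=1$, because $\varepsilon>0$ and $\boldsymbol{\pi}>0$. The diagonal is positive as well. Hence the directed graph of $Y$ coincides with that of $M$, and irreducibility depends only on this graph. Since $M$ is irreducible, so is $Y$. The role of strict positivity of $\varepsilon$ and of $\boldsymbol{\pi}$ is to avoid the loss of edges seen in Remark~\ref{rmk:irreducible}, where the reversibilized chain becomes reducible.

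I expect no genuine obstacle. The one point needing care is to keep the off-diagonal weights strictly positive on every edge of $M$ while keeping the diagonal nonnegative. Taking the weights proportional to $\hat\pi_i\hat\pi_j$ and bounding the diagonal deficit by $\varepsilon\le 1$ through $\sum_j\pi_j=1$ settles both at once. As a final remark, $X=D_{\hat{\boldsymbol{\pi}}}^{-1}YD_{\hat{\boldsymbol{\pi}}}$ is then an irreducible stochastic matrix that is reversible with respect to $\boldsymbol{\pi}$, consistent with Remark~\ref{rmk:irreducible_and_reversible}.
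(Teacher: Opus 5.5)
Your proof is correct, and it takes a more direct route than the paper. The paper's argument has three steps:
\begin{itemize}
\item it takes the proposal $Q=D_{M\mathbf{1}}^{-1}M$ and applies the Metropolis--Hastings (or Barker) acceptance rule;
\item it relies on the general facts of Section~\ref{sec:feasible_and_convex} (nonnegative diagonal, detailed balance, support inherited from $Q$) and on Remark~\ref{rmk:irreducible}, which for a symmetric pattern gives $\alpha_{ij}\alpha_{ji}\neq 0$ on every edge, so that no edge is lost and irreducibility survives;
\item only then does it rescale by $D_{\hat{\boldsymbol{\pi}}}$.
\end{itemize}
You instead write down a closed-form $Y$ directly in the symmetric variable. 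You then check each defining property of $\mathcal{R}_{\boldsymbol{\pi}}\cap\mathbb{S}(M)$ by a one-line computation.

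Your version is self-contained and transparent. The off-diagonal support of $Y$ visibly equals that of $M$, so irreducibility is immediate rather than inferred from how the acceptance ratios behave. The paper's version instead links the feasible point to the standard Metropolis--Hastings reversibilization, which is the baseline in its numerical comparisons. It also shows that any acceptance rule of that type yields a feasible irreducible point.

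One caveat concerns your side remark. $X_{ij}=\varepsilon M_{ij}\pi_j$ is a Metropolis chain only in a degenerate sense: this proposal already satisfies detailed balance, so every acceptance probability equals $1$. It is not what the Metropolis rule produces from the paper's proposal $D_{M\mathbf{1}}^{-1}M$, which gives off-diagonal entries $\min\{1/d_i,\ \pi_j/(\pi_i d_j)\}$ with $d=M\mathbf{1}$. Nothing in your argument depends on this remark.
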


\begin{proof}
{Consider the matrix $Q=D_{M\mathbf{1}}^{-1} M$. Observe that $Q$ is an irreducible stochastic matrix in ${\mathbb{S}(M)}$ since $M$ is irreducible.}
By either the Metropolis--Hastings or the Barker algorithms, we know that there exist 
a
weighting matrix $\alpha$ such that the detailed balance conditions for $S = Q \circ \alpha$ hold, i.e., for which we have $S\mathbf{1} = \mathbf{1}, \text{ and } D_{\boldsymbol{\pi}}S = S^\top D_{\boldsymbol{\pi}}$. In addition, we have $S\in \mathbb{S}(M)$. 
{Moreover, since $M$ is symmetric, in view of Remark~\ref{rmk:irreducible}, $\alpha_{ij}\alpha_{ji}\ne 0$ for any $(i,j)\in\Omega_M$, $i\ne j$, therefore $S$ is irreducible as well.} Scaling by the diagonal matrix $D_{\hat{\boldsymbol{\pi}}}$ from both sides, we get that $D_{\hat{\boldsymbol{\pi}}} S D_{\hat{\boldsymbol{\pi}}}^{-1} \in\mathcal{R}_{\boldsymbol{\pi}}$, and this scaling preserves the pattern $\mathbb{S}(M)$. Therefore, we can choose the matrix $Y$ as $D_{\boldsymbol{\hat\pi}} S D_{\boldsymbol{\hat\pi}}^{-1}$. 
\end{proof}

\begin{proposition}\label{pro:convexity}
Consider a probability distribution $\boldsymbol{\pi}>0$, and a pattern taken from a given irreducible matrix $M\in \left\lbrace 0,1\right\rbrace^{n\times n}$. Assume that $M$ is symmetric and $M_{ii}=1$ for any $i=1,\ldots,n$. Then, the intersection $\mathcal{R}_{\boldsymbol{\pi}} \cap \mathbb{S}(M)$ is a convex set.
\end{proposition}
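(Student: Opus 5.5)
The plan is to show directly that $\mathcal{R}_{\boldsymbol{\pi}}$ and $\mathbb{S}(M)$ are each convex, and then use that an intersection of convex sets is convex. Both sets are cut out by linear equalities and linear inequalities in the entries of the matrix, so both are in fact polyhedra. This gives convexity, and also closedness, which is useful for the existence of a minimizer in \eqref{eq:original}. Proposition~\ref{prop:it_is_not_empty} shows the intersection is nonempty, so the convex set obtained is not trivial.

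First I treat $\mathcal{R}_{\boldsymbol{\pi}}$. Since $\boldsymbol{\pi}$ is fixed, $\hat{\boldsymbol{\pi}}=\boldsymbol{\pi}^{\nicefrac12}$ is a fixed vector. The defining conditions are then:
\begin{itemize}
\item $S\ge 0$, i.e.\ $S_{ij}\ge 0$ for all $i,j$ (half-spaces);
\item $S=S^\top$, i.e.\ $S_{ij}-S_{ji}=0$ (hyperplanes);
\item $S\hat{\boldsymbol{\pi}}=\hat{\boldsymbol{\pi}}$, i.e.\ $\sum_j S_{ij}\hat\pi_j=\hat\pi_i$ (affine hyperplanes).
\end{itemize}
Concretely, take $S_1,S_2\in\mathcal{R}_{\boldsymbol{\pi}}$, $t\in[0,1]$, and set $S_t=tS_1+(1-t)S_2$. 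Then $S_t\ge0$ as a convex combination of nonnegative numbers, and $S_t^\top=S_t$ by linearity of transposition. Finally,
\[
S_t\hat{\boldsymbol{\pi}}=t\hat{\boldsymbol{\pi}}+(1-t)\hat{\boldsymbol{\pi}}=\hat{\boldsymbol{\pi}},
\]
where the coefficients $t$ and $1-t$ summing to one is exactly what preserves this affine condition.

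Next, $\mathbb{S}(M)$ is the linear subspace $\{B: B_{ij}=0 \ \forall (i,j)\notin\Omega_M\}$. If $B_1,B_2$ vanish outside $\Omega_M$, so does any linear combination of them, hence any convex combination. So $S_t\in\mathbb{S}(M)$ whenever $S_1,S_2\in\mathbb{S}(M)$, and the intersection is convex.

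There is no real obstacle. The one point to state explicitly is that $\mathcal{R}_{\boldsymbol{\pi}}$ is defined \emph{without} an irreducibility requirement. As observed in Remark~\ref{rmk:irreducible_and_reversible}, irreducibility is not preserved under limits. It is also not what is being claimed here: convex combinations of irreducible matrices could a priori lose irreducibility, although in fact nonnegative combinations only enlarge supports. I will therefore note that the statement concerns the set without irreducibility, and that the hypotheses on $M$ (symmetry, unit diagonal, irreducibility) are not needed for convexity. They are used only for nonemptiness, in Proposition~\ref{prop:it_is_not_empty}.
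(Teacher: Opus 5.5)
Your proposal is correct and takes essentially the same route as the paper. Both arguments take a convex combination of two elements of $\mathcal{R}_{\boldsymbol{\pi}} \cap \mathbb{S}(M)$ and check nonnegativity, symmetry, the relation $Y\hat{\boldsymbol{\pi}}=\hat{\boldsymbol{\pi}}$, and the sparsity pattern one at a time; your extra remarks (polyhedrality and closedness of the set, and that the hypotheses on $M$ matter only for nonemptiness) are accurate additions.
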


\begin{proof}
Let $Y_1$ and $Y_2$ be two matrices in $\mathcal{R}_{\boldsymbol{\pi}} \cap \mathbb{S}(M)$. By the definition of set $\mathcal{R}_{\boldsymbol{\pi}}$ in~\eqref{def:symmetric_reversible}, this implies that $Y_i$ is nonnegative, symmetric, and preserves $\hat{\boldsymbol{\pi}}$, i.e., for $i=1,2$
\[
Y_i \ge 0, \, Y_i = Y_i^{\top}, \; \mbox{and } Y_i \hat{\boldsymbol{\pi}} = \hat{\boldsymbol{\pi}}.
\]
 Consider a convex combination ${Y}_\lambda = \lambda {Y}_1 + (1-\lambda){Y}_2$ with $\lambda \in [0,1]$. $Y_{\lambda}$ is clearly nonnegative since the set of nonnegative matrices is convex. Moreover, the symmetry is preserved, since
 \[
 ({Y}_\lambda)^\top = \lambda {Y}_1^\top + (1-\lambda){Y}_2^\top = {Y}_\lambda.
 \]
Finally, ${Y}_\lambda \hat{\boldsymbol{\pi}} = \lambda {Y}_1\hat{\boldsymbol{\pi}} + (1-\lambda)Y_2\hat{\boldsymbol{\pi}} = \hat{\boldsymbol{\pi}}$, showing that $\boldsymbol{\hat\pi}$ is an eigenvector associated with $1$ for the convex combination $Y_{\lambda}$. Therefore, $Y_\lambda \in\mathcal{R}_{\boldsymbol{\pi}}$ for all $\lambda \in [0,1]$. Finally, we observe that the pattern is preserved, since $Y_1,Y_2 \in \mathbb{S}(M)$ implies ${Y}_{\lambda} \in \mathbb{S}(M)$.
We then conclude that the feasible set in the optimization problem~\eqref{eq:original} is convex.
\end{proof}

This implies that the matrix nearness problem with respect to Frobenius norm $\|\cdot\|_{F}$ admits a unique solution, as it reduces to the minimization of a strictly convex quadratic objective function
over a convex feasible set. This is consistent with the behavior observed in the unstructured case studied in~\cite{durastante2025riemannianoptimizationapproachfinding,MR3338930}. 

Furthermore, we remark that the direct use of the Metropolis--Hastings approach, or similar ones, does not allow to control any norm of the %
perturbation applied to the starting matrix, since it only performs a reversibilization of a given Markov chain, without any theoretical guarantees on the closeness from the original one; see Fig.~\ref{fig:norm_distance_to_metropolis} in Section~\ref{sec:numerical_examples} for an example of this.

\section{Reformulation of the optimization problem}\label{sec:IPM}

A first reformulation of problem \eqref{eq:original} is obtained by vectorizing both the objective function and the constraints. 
{To express the vectorization we use the $\operatorname{vec}$ operator, which transforms a matrix into a vector by stacking its columns sequentially. Specifically, given a matrix $A = (A_{ij}) \in \mathbb{R}^{m \times n}$, the vector $\mathbf{a} = \operatorname{vec}(A)$ has entries $a_k = A_{ij}$ with $k=i + m(j-1)$.
A key identity involving this operator is $\operatorname{vec}(AXB) = (B^\top \otimes A)\,\operatorname{vec}(X)$, where ``$\otimes$'' denotes the Kronecker product and $A$, $X$, and $B$ are matrices of compatible dimensions (with $A$ or $B$ possibly being vectors).}

In the following lemma, we provide the main constructive steps behind this {reformulation}.
\begin{lemma}
\label{lem:vectorization-on-the-constriant}
    Consider a probability vector $\boldsymbol{\pi}>0$, and a sparsity pattern $\Omega_M$, where $M\in \left\lbrace 0,1 \right\rbrace^{n\times n}$ is irreducible, symmetric and such that $M_{ii}=1$ for any $i=1,\ldots,n$. %
    Then, the set of constraints in~\eqref{eq:original} for the matrix $Y \in \mathbb{R}^{n\times n}$ can be rephrased as a set of constraints on $\operatorname{vec}(Y)$ as follows:
    \begin{equation}
    \label{eq:constriant-vec}
    \begin{array}{rl}
        &({{\hat{\boldsymbol{\pi}}}^{\top}} \otimes I)\operatorname{vec}(Y) = \hat{\boldsymbol{\pi}}, \\
        & ( I_{n^2} - K) \operatorname{vec}(Y) = 0, \\ 
        & \operatorname{diag}(\operatorname{vec}(\mathbf{1}\mathbf{1}^{\top} - M \circ \mathbf{1}\mathbf{1}^{\top}))\operatorname{vec}(Y)= 0,   \\
        & \operatorname{vec}(Y) \geq 0,
    \end{array}
    \end{equation}
where $K$ is the orthogonal commutation matrix such that $K\operatorname{vec}(\Delta)=\operatorname{vec}(\Delta^\top)$.
\end{lemma}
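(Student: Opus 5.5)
The plan is to take the constraints defining $\mathcal{R}_{\boldsymbol{\pi}} \cap \mathbb{S}(M)$ one at a time and show that each is equivalent to exactly one line of \eqref{eq:constriant-vec}. Membership of $Y$ in this set means four things: $Y\hat{\boldsymbol{\pi}}=\hat{\boldsymbol{\pi}}$, $Y=Y^\top$, $Y_{ij}=0$ for $(i,j)\notin\Omega_M$, and $Y\ge 0$. Since $\operatorname{vec}$ is a linear bijection between $\mathbb{R}^{n\times n}$ and $\mathbb{R}^{n^2}$, it is enough to prove each equivalence separately. The whole set of constraints then transfers.

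\textbf{Eigenvector and symmetry constraints.} For $Y\hat{\boldsymbol{\pi}}=\hat{\boldsymbol{\pi}}$, write $Y\hat{\boldsymbol{\pi}} = I\,Y\,\hat{\boldsymbol{\pi}}$ and apply the identity $\operatorname{vec}(AXB)=(B^\top\otimes A)\operatorname{vec}(X)$ with $A=I$ and $B=\hat{\boldsymbol{\pi}}$. This gives $\operatorname{vec}(Y\hat{\boldsymbol{\pi}})=(\hat{\boldsymbol{\pi}}^\top\otimes I)\operatorname{vec}(Y)$. The left-hand side is an $n$-vector, so its $\operatorname{vec}$ is itself, and the condition becomes the first line. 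For symmetry, the defining property of $K$ gives $\operatorname{vec}(Y^\top)=K\operatorname{vec}(Y)$. Injectivity of $\operatorname{vec}$ then shows $Y=Y^\top$ holds if and only if $(I_{n^2}-K)\operatorname{vec}(Y)=0$.

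\textbf{Pattern and nonnegativity constraints.} Because $M\in\{0,1\}^{n\times n}$, we have $M\circ\mathbf{1}\mathbf{1}^\top=M$. Hence the diagonal matrix $\operatorname{diag}(\operatorname{vec}(\mathbf{1}\mathbf{1}^\top-M))$ has diagonal entry $1$ at position $k=i+n(j-1)$ exactly when $(i,j)\notin\Omega_M$, and $0$ otherwise. Multiplying it by $\operatorname{vec}(Y)$ selects the entries $Y_{ij}$ with $(i,j)\notin\Omega_M$. So the third line vanishes if and only if all these entries are zero, which is the definition of $Y\in\mathbb{S}(M)$. Nonnegativity is entrywise and $\operatorname{vec}$ only permutes entries into a vector, so $Y\ge0$ is equivalent to $\operatorname{vec}(Y)\ge 0$.

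No step is a real obstacle. The only place that needs care is the pattern constraint, where the index convention of $\operatorname{vec}$ has to match both the diagonal mask and the definition of $\Omega_M$; I would state the index map explicitly to make that match clear. The hypotheses on $M$ (irreducibility, symmetry, unit diagonal) are not needed for this equivalence. They only guarantee, through Proposition~\ref{prop:it_is_not_empty}, that the resulting vectorized set is nonempty, and I would remark on this at the end.
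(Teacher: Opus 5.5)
Your proposal is correct and follows the paper's proof essentially line by line: the eigenvector constraint via $\operatorname{vec}(AXB)=(B^\top\otimes A)\operatorname{vec}(X)$, symmetry via the commutation matrix $K$, the pattern via the diagonal $0/1$ mask, and nonnegativity by entrywise vectorization. Your additions make the argument more explicit than the paper's without changing it: injectivity of $\operatorname{vec}$, the explicit index map $k=i+n(j-1)$, and the remark that the hypotheses on $M$ are not used for the equivalence itself.
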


\begin{proof}
The first constraint in~\eqref{eq:constriant-vec} arises form the eigenvector relation $Y \hat{\boldsymbol{\pi}} = \hat{\boldsymbol{\pi}}$, using the relation
\[
\operatorname{vec}(Y \hat{\boldsymbol{\pi}}) = (\hat{\boldsymbol{\pi}}^{\top} \otimes I) \operatorname{vec}(Y).
\]
The second constraint arises from the relation $Y^{\top}= Y$; indeed, vectorizing we get
\[
(I \otimes I - (I \otimes I) K) \operatorname{vec}(Y) =  (I_{n^2} -  K) \operatorname{vec}(Y)=0,
\]
where we employed the relation $\operatorname{vec}(Y) = (I \otimes I)\operatorname{vec}(Y)$ and $\operatorname{vec}(Y^\top)= K\operatorname{vec}(Y)$.

The third constraint models the relation associated with the pattern $\Omega_M$. In practice, we are imposing that for the entries $(i,j) \not \in \Omega_M$, the corresponding element $Y_{ij} =0$. This translates as a constraint on $\operatorname{vec}(Y)$, imposing that
\[
\operatorname{diag}(\operatorname{vec}(\mathbf{1}\mathbf{1}^{\top} - M \circ \mathbf{1}\mathbf{1}^{\top}))\operatorname{vec}(Y)= 0.
\]
Lastly, the constraint on the nonnegativity of $Y$ can be obtained performing a vectorization~step.
\end{proof}

As a result of the above discussion, vectorizing the objective function and taking into account all the constraints in Lemma~\ref{lem:vectorization-on-the-constriant}, we obtain from the optimization problem in~\eqref{eq:original}:
\begin{equation}\label{eq:p1}
        \begin{array}{rl}
         \displaystyle \min_{Y \in\mathbb{R}^{n \times n}} & \displaystyle \mathrm{J}(Y) = \frac{1}{2}\|(D_{\hat{\boldsymbol{\pi}}} \otimes D_{\hat{\boldsymbol{\pi}}}^{-1})\operatorname{vec}(Y) - \operatorname{vec}(P)\|_2^2,  \\
        \text{s.t.} & 
        (\hat{\boldsymbol{{\boldsymbol{\pi}}}}^{\top} \otimes I)\operatorname{vec}(Y) = \hat{\boldsymbol{\pi}}, \\
        & (I_{n^2} -  K   ) \operatorname{vec}(Y) = 0, \\ 
        & \operatorname{diag}(\operatorname{vec}(\mathbf{1}\mathbf{1}^{\top} - M \circ \mathbf{1}\mathbf{1}^{\top}))\operatorname{vec}(Y)= 0,   \\
        & \operatorname{vec}(Y) \geq 0.
    \end{array}  
\end{equation}

Recall that $M$ is assumed to be symmetric and to contain the diagonal, and let $s_M$ denote the cardinality of the pattern $\mathbb{S}(M)$. By exploiting the symmetric structure of $\mathcal{R}_{\boldsymbol{\pi}}$, problem~\eqref{eq:original} admits a more compact reformulation than~\eqref{eq:p1}, in terms of both the number of variables and constraints. To this end, we observe that any matrix $Y \in \mathbb{S}(M)$ satisfying the symmetry constraint $Y = Y^\top$ can be expressed as
\begin{equation}\label{eq:constrained_matrix}
Y = M \circ
\begin{bmatrix}
        Y_{11} & Y_{12} & \cdots & \cdots & Y_{1n} \\
        Y_{12} & \ddots & \ddots & & \vdots \\
        \vdots & \ddots  & \ddots & \ddots   &               \vdots   \\
           \vdots   &   &        \ddots    &   \ddots    &  Y_{(n-1)n}\\
        Y_{1n} & \cdots & \cdots & Y_{(n-1)n} & Y_{nn}
\end{bmatrix}.
\end{equation}
In other words, the entries in the strictly lower triangular part are uniquely determined by the nonzero ones in the upper triangular part.

Hence, we may reformulate the optimization problem in terms of only those variables allowed to be nonzero, corresponding to the sparsity pattern of the upper triangular part~$\mathbb{S}(M)$.

To this aim, let $y_M = (s_M - n)/2 + n$, with $n \leq y_M \leq s_M$,  and define the vector
\[
\mathbf{y} = \begin{bmatrix}
Y_{i_1 j_1} & Y_{i_2 j_2} & \cdots & Y_{i_{y_M} j_{y_M}}
\end{bmatrix}^\top,
\]
where each pair $(i_k, j_k) \in \Omega_{\operatorname{triu}(M)}$.

We start defining an operator mapping the vector $\mathbf{y}$ into the upper triangular part of $Y$. To this end, define the matrix constructor $S(\mathbf{y}) \in \mathbb{R}^{n \times n}$ defined elementwise as
\begin{equation}\label{eq:S_matrix_operator}
    (S(\mathbf{y}))_{ij} =
\begin{cases}
Y_{ij}, & (i,j) \in  \Omega_{\operatorname{triu}(M)},\ i < j, \\[6pt]
\frac{1}{2}Y_{ii}, & i = j,  \\[6pt]
{0}, &\hbox{otherwise}.
\end{cases}
\end{equation}
We have, hence, $Y = S(\mathbf{y}) + S(\mathbf{y})^\top  $ and $\operatorname{vec}({Y})=\operatorname{vec}({S(\mathbf{y})})+ K\operatorname{vec}({S(\mathbf{y})})$. Let us now define by $r(i,j) = (j-1)n + i$ the row index corresponding to entry $(i,j)$ in the vectorization in $\operatorname{vec}({S(\mathbf{y})})$ and by $k(i,j)$ the index of $Y_{ij}$ in $\mathbf{y}$ whenever $(i,j) \in  \Omega_{\operatorname{triu}(M)}$. With these definitions, we can consider the operator $\Pi \in {\{0,1\}}^{n^2 \times y_M}$ as

    \begin{equation}
    \Pi_{\,r(i,j),\,k} =
    \begin{cases}
    1, & (i,j) \in \Omega_{\operatorname{triu}(M)},\ i < j,\ k = k(i,j), \\[6pt]
     \frac{1}{2}, & i = j,\ k = k(i,j), \\[6pt]
    0, & \text{otherwise},
    \end{cases}\label{eq:pi_projector}
    \end{equation}
i.e., $\Pi \mathbf{y} = \operatorname{vec}(S(\mathbf{y}))$. Moreover, defining the weighting vector $\mathbf{d} \in \mathbb{R}^{y_M}$ with  
\[
    d_{k} =
    \begin{cases}
    1, & (i,j) \in \Omega_{\operatorname{triu}(M)},\ i < j,\ k = k(i,j), \\[6pt]
     \frac{1}{2}, & i = j,\ k = k(i,j), \\[6pt]
    \end{cases}
    \]
we have

\[
\Pi^\top  ( \Pi \mathbf{y} + K\Pi \mathbf{y}  ) =\Pi^\top  (\operatorname{vec}{(S(\mathbf{y}))} + \operatorname{vec}{(S(\mathbf{y})^\top )}  ) = \operatorname{diag}(\mathbf{d})\mathbf{y},
\]
where we used $\Pi^\top  \operatorname{vec}{(L)} = 0$ for any strictly lower triangular matrix $L$.  Using the observations carried out until now, we find $$\operatorname{vec}({Y}) = (I+K)\Pi\mathbf{y}.$$ Defining now, for the sake of brevity $\mathbf{p}=\operatorname{vec}(P)$, we write \eqref{eq:p1}  as
\begin{equation}\label{eq:p1_compact_final}
        \begin{array}{rl}
      \displaystyle \min_{\mathbf{y} \in \mathbb{R}^{y_M}} & \displaystyle \frac{1}{2}\|(D_{\hat{\boldsymbol{\pi}}} \otimes D_{\hat{\boldsymbol{\pi}}}^{-1})(I+K)\Pi \mathbf{y} - \mathbf{p}\|_2^2,  \\
        \text{s.t.} & (\hat{\boldsymbol{{\boldsymbol{\pi}}}}^{\top} \otimes I)(I+K)\Pi \mathbf{y} =\hat{\boldsymbol{\pi}}   \\
        &  \mathbf{y} \geq 0,
    \end{array}  
\end{equation}
where we used the fact that $\Pi^{\top} ((I+K)\Pi \mathbf{y}  ) = \operatorname{diag}(\mathbf{d}) \mathbf{y}$.
It is important to note that in this formulation, we have $n$ equality constraints and $y_M$ variables with $n \leq y_M$, which significantly reduces the dimensionality of the problem while preserving its essential structure. Finally, it is important to note that, thanks to the following Proposition \ref{eq:prop_max_rank}, the objective in \eqref{eq:p1_compact_final} is strongly convex.

\begin{proposition}\label{eq:prop_max_rank}
Given $\mathbb{R}^{n}  \ni \boldsymbol{\pi} > 0$, $K \in \mathbb{R}^{n^2 \times n^2}$ the commutation matrix for which $\operatorname{vec}(Y^\top) = K \operatorname{vec}(Y)$ for any $Y \in \mathbb{R}^{n \times n}$, and the projector $\Pi \in \mathbb{R}^{n^2 \times y_M}$ in~\eqref{eq:pi_projector} built from $M \in \{0,1\}^n$ irreducible, symmetric, and with $M_{i,i} = 1$, $i=1,\ldots,n$, then
\[
\operatorname{rank}\left((D_{\hat{\boldsymbol{\pi}}} \otimes D_{\hat{\boldsymbol{\pi}}}^{-1})(I+K)\Pi\right) = y_M.
\]
\end{proposition}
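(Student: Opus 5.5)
The matrix $(D_{\hat{\boldsymbol{\pi}}} \otimes D_{\hat{\boldsymbol{\pi}}}^{-1})(I+K)\Pi$ has $y_M$ columns, so I will prove the rank claim by showing that its kernel is trivial. The first factor $D_{\hat{\boldsymbol{\pi}}} \otimes D_{\hat{\boldsymbol{\pi}}}^{-1}$ is a Kronecker product of invertible diagonal matrices, since $\boldsymbol{\pi}>0$ and hence $\hat{\boldsymbol{\pi}}>0$. It is therefore an invertible diagonal matrix of size $n^2$. Multiplying by it leaves the rank unchanged, so it suffices to show that $(I+K)\Pi$ has full column rank $y_M$.

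For this step I will use the identity derived just before the statement, namely $\Pi^\top (I+K)\Pi \mathbf{y} = \operatorname{diag}(\mathbf{d})\mathbf{y}$ for every $\mathbf{y}$. Equivalently, $\Pi^\top(I+K)\Pi = \operatorname{diag}(\mathbf{d})$. Suppose $(I+K)\Pi\mathbf{y} = 0$. Applying $\Pi^\top$ gives $\operatorname{diag}(\mathbf{d})\mathbf{y}=0$. Every entry of $\mathbf{d}$ is either $1$ or $\tfrac12$, so $\operatorname{diag}(\mathbf{d})$ is invertible and $\mathbf{y}=0$. Hence $\ker\big((I+K)\Pi\big)=\{0\}$ and the rank equals $y_M$.

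As a direct cross-check, I can also argue entrywise. The vector $(I+K)\Pi\mathbf{y} = \operatorname{vec}(S(\mathbf{y})+S(\mathbf{y})^\top)$ contains the entry $Y_{ij}$ at position $r(i,j)$ for each $(i,j)\in\Omega_{\operatorname{triu}(M)}$ with $i<j$. For $i=j$ it contains $\tfrac12 Y_{ii}+\tfrac12 Y_{ii}=Y_{ii}$. So every component of $\mathbf{y}$ appears, up to a nonzero factor, as some entry of the image, and the map is injective.

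The only delicate step is the identity $\Pi^\top(I+K)\Pi=\operatorname{diag}(\mathbf{d})$. I will verify it using two facts. First, $\Pi^\top$ annihilates vectorizations of strictly lower triangular matrices. Second, $K$ swaps the positions $r(i,j)$ and $r(j,i)$, so $K$ fixes the diagonal positions, which carry weight $\tfrac12$. Irreducibility of $M$ plays no role in this argument. Only $M_{ii}=1$ and the symmetry of $M$ are used, and they serve to make $\mathbf{y}$ and $\Pi$ well defined.
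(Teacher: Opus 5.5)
Your proof is correct, but it reaches injectivity by a different route than the paper. Both arguments first discard the invertible diagonal factor $D_{\hat{\boldsymbol{\pi}}} \otimes D_{\hat{\boldsymbol{\pi}}}^{-1}$.

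\textbf{The paper's route.} It argues structurally:
\begin{itemize}
\item $\ker(I+K)$ consists of vectorized skew-symmetric matrices;
\item the range of $\Pi$ consists of vectorized triangular matrices $S(\mathbf{y})$;
\item the only skew-symmetric triangular matrix is zero;
\item $\Pi$ is itself injective, since its columns are scaled distinct unit vectors.
\end{itemize}

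\textbf{Your route.} You use the identity $\Pi^\top(I+K)\Pi=\operatorname{diag}(\mathbf{d})$. This gives the explicit left inverse $\operatorname{diag}(\mathbf{d})^{-1}\Pi^\top$ of $(I+K)\Pi$, so you need neither the kernel description of $I+K$ nor the separate injectivity of $\Pi$.

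Your check of that identity is also more careful than the paper's one-line justification. The paper only invokes $\Pi^\top\operatorname{vec}(L)=0$ for strictly lower triangular $L$. But $S(\mathbf{y})^\top$ has diagonal $\frac{1}{2}Y_{ii}$, which contributes the second $\frac{1}{4}Y_{ii}$; you account for this correctly.

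\textbf{What your route buys.} It is quantitative. Since $K$ is symmetric with $K^2=I$, you have $(I+K)^\top(I+K)=2(I+K)$. Hence $\bigl((I+K)\Pi\bigr)^\top(I+K)\Pi=2\operatorname{diag}(\mathbf{d})$. So the columns of $(I+K)\Pi$ are mutually orthogonal, with norms $\sqrt{2}$ and $1$. It follows that $\lambda_{\min}(Q)\ge \min_{i,j}\pi_i/\pi_j$, an explicit strong-convexity modulus. The paper's qualitative kernel argument does not provide this.

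\textbf{What the paper's route buys.} It shows more transparently why full rank holds: the upper-triangular parametrization simply cannot generate a nonzero element of $\ker(I+K)$.
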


\begin{proof}
The matrix $(D_{\hat{\boldsymbol{\pi}}} \otimes D_{\hat{\boldsymbol{\pi}}}^{-1})$ is a diagonal matrix of size $n^2 \times n^2$ whose diagonal entries are $\nicefrac{\hat{\pi}_i}{\hat{\pi}_j} = \nicefrac{\sqrt{\pi}_i}{\sqrt{\pi}_j} > 0$, $\forall\,i,j=1,\ldots,n$, since $\boldsymbol{\pi} > 0$. Moreover, given $\mathbf{y}=\operatorname{vec}({Y})$, it holds $(I + K)\mathbf{y} = {0}$ iff $Y = - Y^\top$. Hence, to prove the thesis, we need to verify only that no vectorization of a skew-symmetric matrix is in the image of $\Pi$, since $\Pi$ has full rank $y_M$ by construction. On the other hand, this holds by construction since  $\Pi\mathbf{y} = \operatorname{vec}(S(\mathbf{y}))$ for $S(\cdot)$ the operator in~\eqref{eq:S_matrix_operator},  which returns a triangular matrix. The thesis follows by observing that the only skew-symmetric triangular matrix is the zero matrix. 
\end{proof}

\subsection{Formulation in QP Standard form}\label{sec:let_there_be_qp}

Quadratic programming (QP) provides a natural computational framework for the reversibilization problem formulated in the previous section. We seek a matrix $X \in \mathbb{R}^{n\times n}$, endowed with the prescribed sparsity pattern $\Omega_M$, that minimizes the Frobenius distance to the original transition matrix $P$ while satisfying the linear constraints of reversibility, stationarity, and stochasticity. Note that the objective function $\mathrm{J}(Y)$ is strongly convex (cf. Proposition~\ref{eq:prop_max_rank}) and therefore the program admits a unique solution.

For implementation with off-the-shelf solvers, it is sufficient to recognize that problem \eqref{eq:p1_compact_final} can be easily cast in the standard QP form
\begin{equation}\label{eq:standard_form_new}
\begin{aligned}
\min_{\mathbf{y} \in \mathbb{R}^{y_M}} \quad & \frac{1}{2} \mathbf{y}^\top Q \mathbf{y} + \mathbf{c}^\top \mathbf{y} \\
\text{subject to} \quad & A_{\text{eq}} \mathbf{y} = \mathbf{b}_{\text{eq}}, \\
&  \mathbf{y} \geq 0
\end{aligned}
\end{equation}
where $Q = ((D_{\hat{\boldsymbol{\pi}}} \otimes D_{\hat{\boldsymbol{\pi}}}^{-1})(I+K)\Pi)^\top  (D_{\hat{\boldsymbol{\pi}}} \otimes D_{\hat{\boldsymbol{\pi}}}^{-1})(I+K)\Pi \in \mathbb{R}^{{y_M} \times {y_M}}$ is a symmetric matrix defining the quadratic part of the objective,%
\[
\mathbf{c}^\top = -\mathbf{p}^\top (D_{\hat{\boldsymbol{\pi}}} \otimes D_{\hat{\boldsymbol{\pi}}}^{-1})(I+K)\Pi  \in \mathbb{R}^{y_M}
\]
defines the linear part, whereas $A_{\text{eq}}=(\hat{\boldsymbol{{\boldsymbol{\pi}}}}^{\top} \otimes I)(I+K)\Pi $
and $ \mathbf{b}_{eq}=\hat{\boldsymbol{\pi}} $ . %
Hence, Proposition~\ref{eq:prop_max_rank}, implies that $Q$ is positive definite and therefore the problem is strongly convex.

{We stress that in experiments presented in Section \ref{sec:benchmarkingQP}, to solve~\eqref{eq:standard_form_new}, we employ two widely  used solvers: MATLAB's \texttt{quadprog} and the \texttt{gurobi} QP solver. The former offers a simple interface and robust defaults suitable for prototyping and medium-scale instances; the latter provides a state-of-the-art engine with advanced presolve and parallel execution, which we found to deliver shorter runtimes on larger cases. Since the objective is strongly convex, the optimal solution is unique and therefore independent of the choice of algorithm, up to numerical precision.}
{Regarding the computational complexity, in both cases, the selected solver belongs to the class of Interior Point Methods (IPMs). This term refers to a broad family of algorithms that are known to enjoy polynomial worst-case iteration complexity with respect to the problem dimension; see, e.g., \cite{MR4865731}. Both MATLAB's \texttt{quadprog} and \texttt{gurobi} QP solvers provide mature and highly optimized implementations of this well-established algorithmic class. We also note that the dominant computational cost in each IPM iteration is typically the solution of the Newton system obtained by linearizing the perturbed Karush--Kuhn--Tucker optimality conditions.}

\subsection{The case of reducible Markov chains}\label{sec:the_reducible_case}

As pointed out in Definition~\ref{def:reversibility} and Remark~\ref{rmk:irreducible_and_reversible}, in Section~\ref{sec:problem_formulation} we provide a formulation limited to case of irreducible Markov chains. %
Nevertheless, more generally, a Markov chain may exhibit more than one ergodic class. We recall that an \emph{ergodic class} is a subset $C\subseteq\mathcal{V}$ of the state space with the three following properties: \emph{i)} closed, i.e. if $i\in C$ and $p_{ij}>0$, then $j\in C$; \emph{ii)} irreducible, that is for $i,j \in C$, there exists $\ell$ such that $[P^{\ell}]_{ij} >0$; \emph{iii)} aperiodic, which means that for $i \in C$, the $\text{gcd}\left\lbrace \ell \geq 1: [P^{\ell}]_{ii} > 0 \right\rbrace =1$.

A Markov chain with a single ergodic class has a unique stationary distribution $\boldsymbol{\pi}$, and if there are no transient states, then $\boldsymbol{\pi}$ is strictly positive, consistent with Definition~\ref{def:stationary_distribution}.
On the other hand, if the chain contains multiple ergodic classes and possibly some transient states, then the limiting behavior depends on the initial condition. In this case, the set of stationary distributions is not a singleton but rather a convex polytope spanned by the stationary distributions of the individual ergodic classes. More concretely, after a suitable permutation of states, the transition matrix $P$ can be expressed in block form as
\begin{equation}\label{eq:ergodic_decomposition}
    P = \begin{bmatrix}
P_1 & 0   & 0   & \cdots & 0      \\
0   & P_2 & 0   & \cdots & 0      \\
\vdots & \ddots & \ddots & \ddots & \vdots \\
0   & \cdots & 0 & P_E   & 0      \\
P_{T1} & P_{T2} & \cdots & P_{TE} & P_{TT}
\end{bmatrix},
\end{equation}
where each $P_i$ is a stochastic irreducible block associated with an ergodic class, and $T$ indexes the (possibly empty) set of transient states. In this case, every stationary distribution has the form
\begin{equation}\label{eq:stationary_reducible}
 \boldsymbol{\pi}^\top  = [\alpha_1 \boldsymbol{\pi}_1^\top,\ldots, \alpha_E \boldsymbol{\pi}_E^\top,\mathbf{0}_{|T|}^\top],    
\end{equation}
where $\alpha_i \in [0,1]$ with $\sum_{i=1}^E \alpha_i=1$, each $\boldsymbol{\pi}_i$ is the unique stationary distribution of $P_i$, and $\mathbf{0}_{|T|}$ is the zero vector corresponding to the transient states. Hence, uniqueness and positivity of the stationary distribution are guaranteed only when $E=1$ and $T=\emptyset$.

From the perspective of reversibility, the decomposition~\eqref{eq:ergodic_decomposition} highlights that detailed balance condition in Definition~\ref{def:reversibility} can hold within the support of a stationary distribution, that is $\operatorname{supp}(\boldsymbol{\pi}) = \{ i : \pi_i > 0 \}$, while for the indices  corresponding to the transient states we find
\[
\underbrace{\pi_i}_{=0} P_{ij} = \underbrace{\pi_j P_{ji}}_{=0}, 
\qquad \forall\, i \in T, \, j = 1,\ldots,n,
\]
and since both $\pi_i$ and $P_{ji}$ are zero for any $i \in T$, these equations are automatically satisfied. If $E=1$, reversibility is defined with respect to the unique strictly positive $\boldsymbol{\pi}$, as in Definition~\ref{def:reversibility}. If $E>1$, reversibility may still hold within each ergodic block $P_i$ with respect to its stationary distribution $\boldsymbol{\pi}_i$, but a global notion of reversibility for the entire chain requires specifying a convex combination as in~\eqref{eq:stationary_reducible}. In this sense, reversibility is inherently a property of ergodic classes and extends to the whole chain only relative to a chosen positive stationary distribution.

This analysis suggests that, in the general case, the appropriate approach to finding the closest reversible chain with a prescribed sparsity pattern is to remove the transient states and to restrict attention to the ergodic classes. The problem  can then be solved independently within each ergodic class, where the stationary distribution is strictly positive, and reversibility is well defined. An analogous treatment in the setting of dense transition matrices, where the nearest reversible chain is computed via Riemannian optimization, is given in~\cite[\S 3.2]{durastante2025riemannianoptimizationapproachfinding}.
To identify the transient states, we compute a stationary distribution $\boldsymbol{\pi}$ as $\boldsymbol{\pi}^\top = \lim_{k\to\infty} {\boldsymbol{\pi}^{(0)}}^\top P^k$, where $\boldsymbol{\pi}^{(0)}>0$ is a probability vector representing the probability distribution at time 0. If $P$ is reducible, the vector $\boldsymbol{\pi}$ depends on the initial distribution $\boldsymbol{\pi}^{(0)}$, but the indices corresponding to the zero entries of $\boldsymbol{\pi}$ uniquely identify the transient states.  We describe the algorithmic approach based on this idea in Section~\ref{sec:overall_algorithm}.

\subsection{The overall algorithm}\label{sec:overall_algorithm}

In light of Section~\ref{sec:the_reducible_case}, we first decompose the chain into ergodic classes and transient states via standard graph algorithms (e.g., Tarjan’s method~\cite{MR304178}; see also \cite{MR4874150}). We then restrict the optimization to the support of the stationary distribution, discarding transient states since they do not affect the detailed balance condition. This reduction yields independent subproblems on the ergodic classes, each of them well-posed (Proposition~\ref{prop:it_is_not_empty}) with a convex feasible set (Proposition~\ref{pro:convexity}). Problem \eqref{eq:original-rever} is a convex optimization problem with a strongly convex objective function and therefore admits a unique solution.
 
We summarize the entire strategy to obtain a sparse reversible Markov chain using the matrix nearness criterion in Algorithm~\ref{alg:summming_it_up}.
\begin{algorithm}[htbp]
\KwIn{$P \in \mathbb{R}^{n\times n}$ stochastic} %
\KwOut{$R \in \mathbb{R}^{n\times n}$ stochastic, reversible, sharing the stationary distribution of $P$, minimizing $\frac{1}{2} \|P-R\|_F^2$} %
\BlankLine
Compute $\boldsymbol{\pi}^\top = \lim_{k\to\infty} {\boldsymbol{\pi}^{(0)}}^\top P^k$, where $\boldsymbol{\pi}^{(0)}>0$, ${\boldsymbol{\pi}^{(0)}}^\top \mathbf{1} = \mathbf{1}$\;
Identify $\operatorname{supp}(\boldsymbol{\pi})$ and restrict $P$ and $\boldsymbol{\pi}$ to $\tilde{P}$ and $\tilde{\boldsymbol{\pi}}$, discarding transient states\;
Identify the number of ergodic classes $E$ of $\tilde{P}$ via Tarjan’s depth-first search method~\cite{MR304178}\;
\For{each ergodic class $\mathcal{C}$ from $1$ to $E$\label{line:parallel_for}}{
    \tcc{Denote by $\tilde{P}|_\mathcal{C}$ and $\tilde{\boldsymbol{\pi}}|_\mathcal{C}$ the restrictions to the current ergodic class}
    Build $D_{\hat{\boldsymbol{\pi}}} = \operatorname{diag}(\sqrt{\tilde{\boldsymbol{\pi}}|_\mathcal{C}})$\;
    Define $\hat{P} =  D_{\hat{\boldsymbol{\pi}}} \, \tilde{P}|_{\mathcal{C}} \, D_{\hat{\boldsymbol{\pi}}}^{-1}$\;
    Build the binary matrix $M \in \mathbb{S}(\hat{P}+\hat{P}^\top + I)$\;
    Solve the optimization problem~\eqref{eq:p1_compact_final} for $\mathbf{y}$\;
    Assemble $Y|_{\mathcal{C}}$ having the same pattern of $M$ and entries obtained from $\mathbf{y}$\;
    $R|_{\mathcal{C}} = D_{\hat{\boldsymbol{\pi}}}^{-1} X|_{\mathcal{C}} D_{\hat{\boldsymbol{\pi}}}$\;
}
Assemble $\tilde{R}$ by collecting $R|_{\mathcal{C}}$ for the $E$ ergodic classes\;
Reinsert the rows and columns corresponding to the zero entries of $\boldsymbol{\pi}$ into $\tilde{R}$ to obtain $R$ in the original ordering\;
The perturbation matrix $\Delta$ is then given by $\Delta = R - P$\;
\caption{Nearest reversible sparse Markov chain}\label{alg:summming_it_up}
\end{algorithm}

It is important to note that the reduction to ergodic classes makes computations independent across classes, so the loop at Line~\ref{line:parallel_for} of Algorithm~\ref{alg:summming_it_up} is embarrassingly parallel. To balance load across computing units, classes can be grouped so that their total number of unknowns is approximately uniform.

The procedure in Algorithm~\ref{alg:summming_it_up} is implemented by the MATLAB function \\ \texttt{nearest\_sparse\_reversible.m} in the repository \href{https://github.com/Cirdans-Home/sparse-reversible}{Cirdans-Home/sparse-reversible}. Given a possibly non-reversible transition matrix $P$, it parses options, computes (or accepts) the stationary distribution, the pattern, and delegates the optimization to one of the available QP solvers (\lstinline{'quadprog'}, \lstinline{'gurobi'}). When \lstinline{recurse_ergodic} is enabled, the state space is decomposed as in Section~\ref{sec:the_reducible_case} and each ergodic block is processed independently before reassembling the global reversible chain. The outputs comprise the reversible approximation \lstinline{R} and an auxiliary \lstinline{output} structure with solver diagnostics, perturbation statistics, and timings. %

\section{Numerical examples}\label{sec:numerical_examples}

This section presents a set of numerical experiments designed to assess the performance and effectiveness of the algorithm introduced in Section~\ref{sec:overall_algorithm}. The discussion is organized into two parts. The first part (Section~\ref{sec:benchmarkingQP}) focuses on synthetic, sparse Markov chains of increasing dimension, with the goal of validating the proposed method across a broad class of controlled test problems. The second part (Section~\ref{sec:transition_counts}) examines two real-world case studies drawn from molecular dynamics. In both cases, a discrete Markov chain is estimated from simulation data to describe transitions between the various spatial conformations of a complex molecule. Although the underlying physical process is time-reversible, the estimated transition matrices are generally not reversible due to statistical and sampling errors, and therefore must be post-processed to obtain a reversible approximation.

All numerical experiments presented in this section were performed on a laptop equipped with an \mbox{Intel\textsuperscript{\textregistered} Core\texttrademark{} i9-14900HX} processor (24 cores, 32 hardware threads) and $64$~GB of RAM. The operating system is \mbox{Ubuntu 24.04.2 LTS}, and all computations were carried out using \mbox{MATLAB~R2025b}. The MATLAB interface of version \mbox{Gurobi~12.0.1} has been used.

All numerical experiments presented in this work are fully reproducible using the code available at the GitHub repository \href{https://github.com/Cirdans-Home/sparse-reversible}{Cirdans-Home/sparse-reversible}.

\subsection{Bechmarking the QP approaches}\label{sec:benchmarkingQP}

In this section, we compare different solution methods for the QP problem arising from the computation of the reversible matrix closest to a given stochastic one. Specifically, we compare MATLAB's  \lstinline{quadprog} solver~\cite{Altman01011999}, and the commercial \texttt{gurobi} QP solver\footnote{See the website \href{https://www.gurobi.com/}{www.gurobi.com} to get the solver along with an academic license.}.

\subsubsection{Generation of Sparse Random Markov Chains}
\label{subsec:testcase_generation}

To evaluate the proposed methods, we construct a collection of sparse, non-reversible Markov chains with randomly varying state-space sizes. All experiments are reproducible by fixing the random seed.

Specifically, we generate $N = 100$ independent test cases. For each test case, the number of states $n$ is drawn uniformly at random from the interval $[n_{\min}, n_{\max}]$, with $n_{\min} = 100$ and $n_{\max} = 300$. A sparse random matrix $P \in \mathbb{R}^{n \times n}$ is then constructed by sampling $\mathrm{nnz}(P) = \lfloor \alpha n \rfloor$ nonzero entries at uniformly random positions, where $\alpha = 5$ controls the sparsity level. The resulting matrix is therefore sparse, with an average of $\mathcal{O}(n)$ nonzero entries. To ensure that the associated Markov chain is irreducible, we interpret the nonzero pattern of $P$ as the adjacency matrix of a directed graph. We compute the strongly connected components of this graph and restrict $P$ to the largest strongly connected component, thereby guaranteeing irreducibility of the resulting transition structure. Finally, the matrix is row-normalized to obtain a valid transition matrix,
\[
\tilde{P}_{ij} = \frac{P_{ij}}{\sum_{k} P_{ik}}, \quad i = 1, \dots, n,
\]
ensuring that each row sums to one. The resulting matrix $\tilde{P}$ defines a sparse, irreducible, and generally non-reversible Markov chain. This procedure is repeated independently for each test case.

We apply the proposed optimization framework to compute the nearest sparse reversible Markov chain using two different optimization algorithms. We begin by reporting, in Fig.~\ref{fig:nnz_manufactured1}, a comparison between 
\begin{figure}[htbp]
    \sidecaption
    \includegraphics[width=7.1cm]{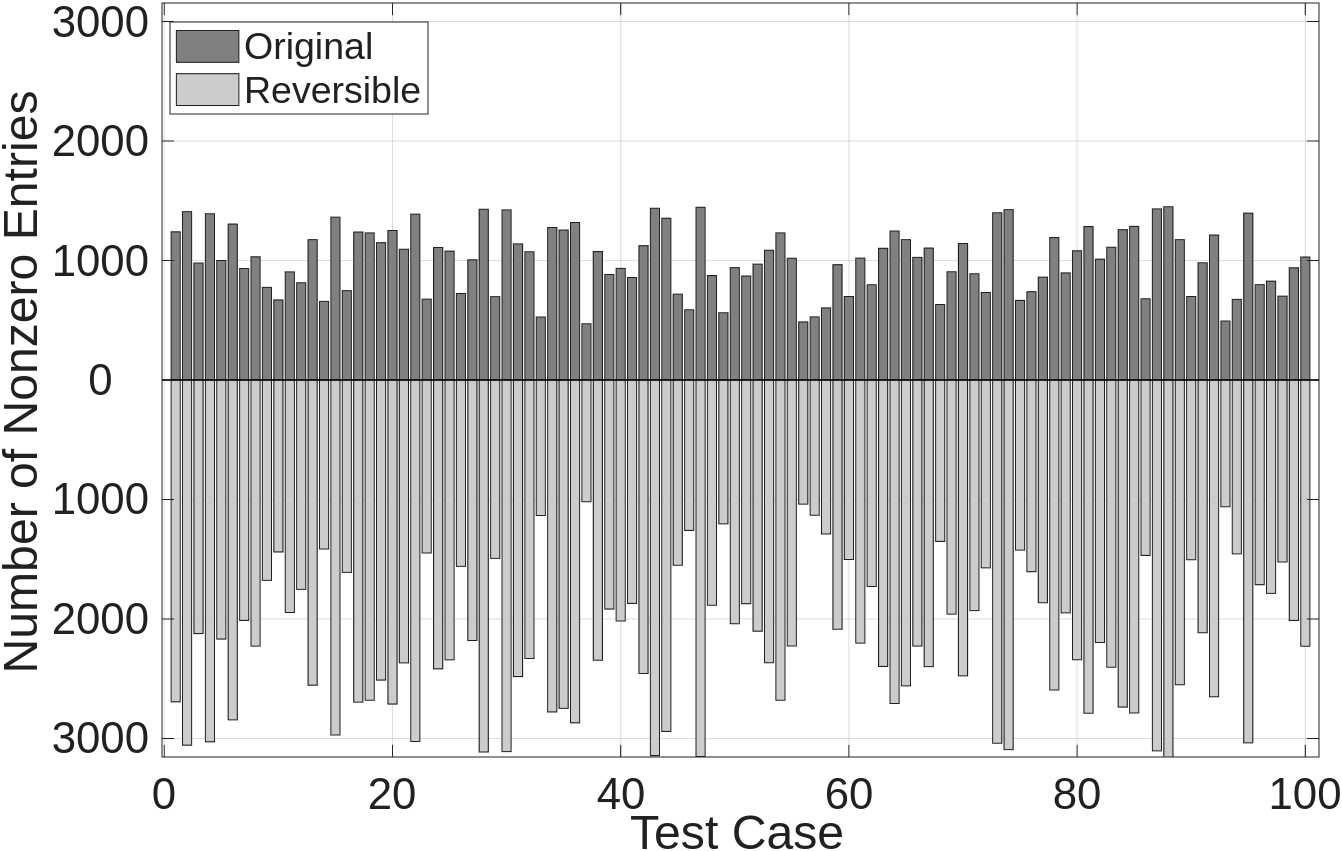}
    \caption{Diverging bar plots showing the number of nonzero entries in the original Markov chain (positive values) and in the recovered reversible matrix (negative values). Since the underlying optimization problem is \emph{strongly convex}, the resulting sparsity pattern is independent of the optimization algorithm employed.}
    \label{fig:nnz_manufactured1}
\end{figure}
the number of nonzero entries of the original sparse transition matrix of the Markov chain and those of the corresponding reversible matrix. The reversible matrices consistently exhibit a substantially larger number of nonzero entries than their original counterparts. This behavior is expected, as enforcing reversibility introduces additional coupling constraints between forward and backward transitions, which generally leads to a denser sparsity pattern; see again Proposition~\ref{pro:convexity}. While the size of the original matrices varies significantly across test cases—reflecting the randomized construction—the relative increase in the number of nonzero entries remains systematic and stable. This indicates that the sparsity–reversibility trade-off is largely independent of the specific realization and is instead an intrinsic consequence of the reversibility constraint.

In Fig.~\ref{fig:algorithm-performances} we report the algorithmic performances of the procedure with respect to the different optimization methods used inside.
\begin{figure}[htbp]
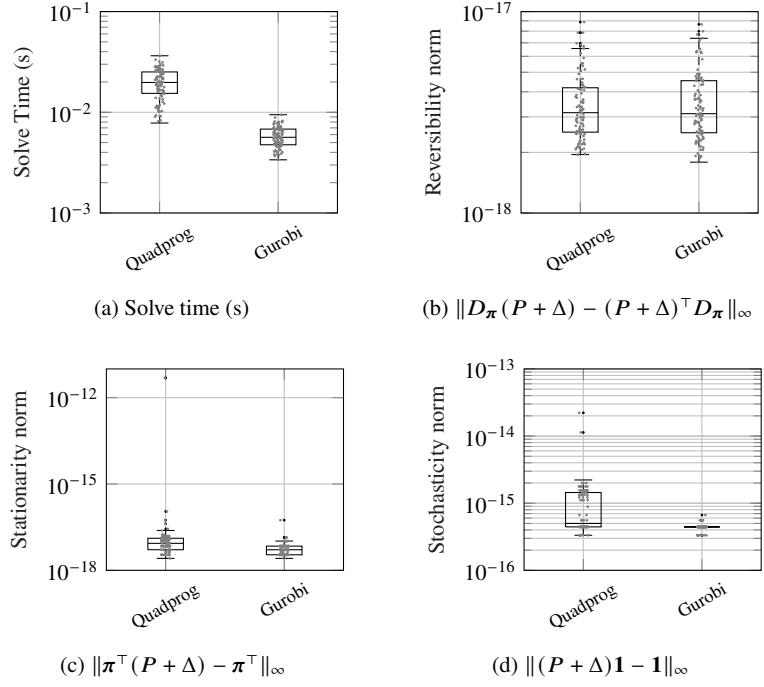

    \centering
    \subfloat[Solve time (\SI{}{\second})]{\definecolor{mycolor1}{rgb}{0.12941,0.12941,0.12941}%
\begin{tikzpicture}

\begin{axis}[%
width=0.26\columnwidth,
height=0.221\columnwidth,
at={(0\columnwidth,0\columnwidth)},
scale only axis,
unbounded coords=jump,
xmin=0.5,
xmax=2.5,
xtick={1,2},
xticklabels={{Quadprog},{Gurobi}},
xticklabel style={rotate=30,font=\scriptsize},
ymin=0.001,
ymax=0.1,
ylabel style={font=\color{mycolor1}},
ylabel={Solve Time (\SI{}{\second})},
axis background/.style={fill=white},
title style={font=\bfseries\color{mycolor1}},
xmajorgrids,
ymajorgrids,
ymode=log,
]
\addplot [color=black, dashed, forget plot]
  table[row sep=crcr]{%
1	0.025157\\
1	0.036412\\
};
\addplot [color=black, dashed, forget plot]
  table[row sep=crcr]{%
2	0.0068065\\
2	0.00946\\
};
\addplot [color=black, dashed, forget plot]
  table[row sep=crcr]{%
1	0.007821\\
1	0.0154065\\
};
\addplot [color=black, dashed, forget plot]
  table[row sep=crcr]{%
2	0.003373\\
2	0.0047575\\
};
\addplot [color=black, forget plot]
  table[row sep=crcr]{%
0.925	0.036412\\
1.075	0.036412\\
};
\addplot [color=black, forget plot]
  table[row sep=crcr]{%
1.925	0.00946\\
2.075	0.00946\\
};
\addplot [color=black, forget plot]
  table[row sep=crcr]{%
0.925	0.007821\\
1.075	0.007821\\
};
\addplot [color=black, forget plot]
  table[row sep=crcr]{%
1.925	0.003373\\
2.075	0.003373\\
};
\addplot [color=black, forget plot]
  table[row sep=crcr]{%
0.85	0.0154065\\
0.85	0.025157\\
1.15	0.025157\\
1.15	0.0154065\\
0.85	0.0154065\\
};
\addplot [color=black, forget plot]
  table[row sep=crcr]{%
1.85	0.0047575\\
1.85	0.0068065\\
2.15	0.0068065\\
2.15	0.0047575\\
1.85	0.0047575\\
};
\addplot [color=black, forget plot]
  table[row sep=crcr]{%
0.85	0.019778\\
1.15	0.019778\\
};
\addplot [color=black, forget plot]
  table[row sep=crcr]{%
1.85	0.0056425\\
2.15	0.0056425\\
};
\addplot [color=mycolor1, only marks, mark=*,  mark options={solid, fill=black, draw=black}, forget plot]
  table[row sep=crcr]{%
nan	nan\\
};
\addplot [color=mycolor1, only marks, mark=*, mark options={solid, fill=black, draw=black}, forget plot]
  table[row sep=crcr]{%
nan	nan\\
};
\addplot [color=gray, only marks, mark size=0.3pt, mark=*, mark options={solid, gray}, forget plot]
  table[row sep=crcr]{%
1.00385978471557	0.036412\\
0.980050459117321	0.028024\\
1.01780889990135	0.018907\\
0.969144758114728	0.028688\\
1.00919001412091	0.019159\\
0.980973380229536	0.022541\\
0.990751318438673	0.018282\\
1.01052088197084	0.0195\\
1.02386111132032	0.025367\\
1.00015972100556	0.010163\\
1.03774972232651	0.015269\\
1.01791536580271	0.019714\\
1.02518136227669	0.026661\\
0.993221808739318	0.011513\\
0.98400979532426	0.027438\\
0.984740692245418	0.016035\\
1.00707915422506	0.025191\\
1.00662869953518	0.029645\\
1.00930387325921	0.022655\\
1.01069350963576	0.023735\\
0.977808594130149	0.018904\\
0.999268962556285	0.028618\\
0.987044356274561	0.010971\\
0.979433145418065	0.021201\\
0.976063413545881	0.022491\\
0.966266253694456	0.0139\\
0.974488362577487	0.018756\\
1.00007704021611	0.025701\\
0.975983398723119	0.011566\\
1.0042337315709	0.030001\\
0.996370671829001	0.022587\\
1.02299245805992	0.022115\\
0.960838691308591	0.009023\\
0.998929077400267	0.025123\\
1.02493229765308	0.025617\\
1.00647579361855	0.027075\\
1.02347713448146	0.007821\\
1.00712522954782	0.020083\\
0.986866425684556	0.016633\\
0.987595857385667	0.018394\\
1.03511468436457	0.016264\\
0.970697829014722	0.021557\\
0.999616439350053	0.026066\\
0.966540702785524	0.033286\\
0.998448655358684	0.013699\\
1.00913789838361	0.009144\\
1.02497041430518	0.031222\\
0.982743835286662	0.017001\\
0.970005745600626	0.009326\\
0.977089667346393	0.018337\\
1.02521983714052	0.016672\\
0.983933549467324	0.017389\\
1.03163112760905	0.021575\\
0.961358740644571	0.021023\\
1.02847860821868	0.020015\\
0.990395643349385	0.008151\\
1.00756090860857	0.008335\\
1.000128633602	0.009138\\
0.986439967437012	0.018429\\
0.966385097638777	0.011129\\
0.97283251449158	0.021902\\
1.01431140492948	0.020101\\
0.970825844948209	0.019506\\
1.01435419215709	0.024123\\
1.03200986863792	0.024052\\
0.983631973682737	0.019842\\
0.967006191572901	0.026197\\
0.992284159831451	0.010112\\
1.03850008073703	0.017793\\
1.01841199075767	0.022204\\
1.00505151936099	0.016616\\
1.02569252215779	0.013463\\
0.960800006172267	0.028273\\
0.993886239045432	0.028277\\
0.986419238281224	0.010592\\
1.01753474425746	0.014409\\
0.992835110032679	0.016963\\
1.02130096306017	0.023899\\
1.03793379887918	0.015544\\
1.02352987027533	0.021642\\
0.983444633277002	0.026301\\
1.01339726605252	0.018969\\
1.02270417096289	0.021507\\
0.969356855847851	0.025369\\
0.982885244641703	0.025895\\
0.992182794342685	0.011048\\
0.997657345503227	0.031897\\
1.03002039119339	0.029204\\
1.02811021590121	0.025092\\
1.02513935087024	0.012745\\
0.973089427981332	0.018642\\
1.00856287884982	0.025008\\
1.01987306513529	0.010263\\
1.03899056367481	0.012222\\
0.988165272075718	0.028598\\
1.01470063422226	0.014848\\
1.02468368939145	0.015678\\
1.00028931972054	0.01183\\
1.01274463545462	0.018229\\
0.995801588627275	0.020238\\
};
\addplot [color=gray, only marks, mark size=0.3pt, mark=*, mark options={solid, gray}, forget plot]
  table[row sep=crcr]{%
1.96539802604903	0.007127\\
1.97370467643693	0.008863\\
1.99597764183825	0.005337\\
1.99385003710566	0.007793\\
2.03284981694496	0.006266\\
2.02428867606193	0.007146\\
1.9628148706448	0.005535\\
1.9618614003536	0.005952\\
2.03649823422501	0.005053\\
2.01313755786716	0.004237\\
1.98939558705531	0.00507\\
2.0089629814998	0.005044\\
2.03262879681653	0.006365\\
1.97644369440714	0.004121\\
1.96526750998559	0.007693\\
1.98701930538149	0.004543\\
1.96087761227316	0.00696\\
2.01450046973473	0.007025\\
1.97216109870357	0.006203\\
1.97228795385923	0.006612\\
2.03976636356282	0.006551\\
2.00240395946478	0.007576\\
2.00564733203853	0.004182\\
1.99233764152116	0.005818\\
1.96184602077116	0.00625\\
1.98116134832305	0.004683\\
2.03559686700349	0.005284\\
1.96592252686028	0.007755\\
2.01414752421353	0.004715\\
2.00524537718393	0.008022\\
2.03334877440323	0.005988\\
2.00392540654515	0.006152\\
1.99141132431174	0.003821\\
1.99000310417645	0.006815\\
2.00750250852987	0.006656\\
1.99519686308025	0.007714\\
2.00240036993088	0.003373\\
2.01562624069761	0.005668\\
1.98407393785962	0.004695\\
2.02056875430789	0.005635\\
1.96273693957647	0.005294\\
2.00102179711322	0.006192\\
2.03686655507329	0.0082\\
2.0047592202944	0.007574\\
1.97811283416857	0.004173\\
2.03436166967602	0.004019\\
1.97036040672915	0.008112\\
1.97809762572805	0.005085\\
2.02774623817437	0.003896\\
1.97418828614072	0.005056\\
1.98888776211894	0.004994\\
1.99340602476012	0.005365\\
1.98082757805423	0.006031\\
1.96765051278089	0.006833\\
2.03348836959213	0.006125\\
1.9669362713895	0.003802\\
1.97226988728838	0.003675\\
2.00910080794064	0.003778\\
1.98672842237643	0.005025\\
2.01949447700174	0.004232\\
1.97816943575594	0.005573\\
1.96672335635005	0.004859\\
1.9983590234648	0.006002\\
2.03594513319349	0.00666\\
1.96333056737442	0.006416\\
2.02036608978739	0.00547\\
2.01079394536687	0.005782\\
2.02789241608903	0.004651\\
2.02890676511616	0.005378\\
1.98088539627878	0.006124\\
2.01413482641656	0.004929\\
2.00924255298055	0.004306\\
2.03362386166627	0.007972\\
2.00681240220928	0.007921\\
1.98488587002925	0.004093\\
2.02072207969875	0.004591\\
2.02491236233107	0.005265\\
1.97077449394735	0.006461\\
2.00386263459519	0.004974\\
2.03179648418807	0.006163\\
2.02112095689757	0.007272\\
1.98477222261616	0.005048\\
1.99273540561102	0.006798\\
1.96028792559672	0.006873\\
1.99416521465189	0.007032\\
1.96599767091441	0.004086\\
2.03063063230378	0.00946\\
2.03261390528361	0.008125\\
2.01085574058542	0.006945\\
1.98152511915596	0.0048\\
1.98752997043141	0.00565\\
2.02524992519006	0.006672\\
1.99628650198579	0.003917\\
2.00931048318498	0.004557\\
1.96166296485906	0.007597\\
1.99477553968371	0.004611\\
2.02954179423482	0.004814\\
1.98932431034777	0.004624\\
1.99201501546364	0.005404\\
1.97533399362647	0.005609\\
};
\end{axis}
\end{tikzpicture}%
}\hfil
    \subfloat[$\|D_{\boldsymbol{\pi}}(P+\Delta) - (P+\Delta)^\top D_{\boldsymbol{\pi}}\|_\infty$]{\input{reversibility}}
    
    \subfloat[$\|\boldsymbol{\pi}^\top (P+\Delta) - \boldsymbol{\pi}^\top\|_\infty$]{\input{stationarity}}\hfil
    \subfloat[$\|(P+\Delta)\mathbf{1} - \mathbf{1}\|_\infty$]{\input{stochastic}}
        
    \caption{Boxplots summarizing the distribution of solution times and condition numbers across all test cases, grouped by solver. Each boxplot displays the median and interquartile range, with individual test cases shown as jittered dots.} %
    \label{fig:algorithm-performances}
\end{figure}
These are boxplots summarizing the empirical distribution of the reported quantities---solution times, distance to reversibility, being $\boldsymbol{\pi}$ the stationary distribution, and being stochastic---across all test cases for each solver. For a given method, the central line of the box denotes the median value, while the lower and upper edges of the box correspond to the first and third quartiles, respectively. The whiskers extend to the most extreme data points that are not classified as outliers according to the standard $1.5$ interquartile range criterion. Individual data points are overlaid as jittered dots, where each dot represents the outcome of a single test case. The horizontal jitter is introduced solely to avoid overlap and does not convey additional information.

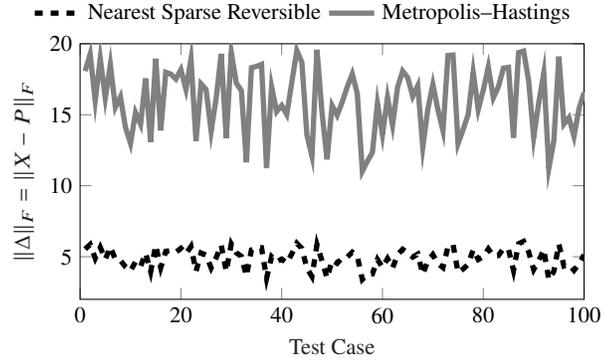
\begin{figure}[htbp]
    \sidecaption
    \definecolor{mycolor1}{rgb}{0.06600,0.44300,0.74500}%
\definecolor{mycolor2}{rgb}{0.86600,0.32900,0.00000}%
\definecolor{mycolor3}{rgb}{0.12941,0.12941,0.12941}%
\begin{tikzpicture}
\begin{axis}[%
width=6.7cm,
height=3.4cm,
at={(1.293in,0.678in)},
scale only axis,
xmin=0,
xmax=100,
xlabel style={font=\color{mycolor3}},
xlabel={Test Case},
ymin=2,
ymax=20,
ylabel style={font=\color{mycolor3}},
ylabel={$\|\Delta\|_F = \| X - P \|_F$},
axis background/.style={fill=white},
legend columns=2,
legend style={legend cell align=left, align=left, at={(1,1.2)}, fill=none, draw=none}
]
\addplot [color=black, line width=2.0pt, dashed]
  table[row sep=crcr]{%
1	5.53857037388604\\
2	5.86371702257808\\
3	4.89125840371233\\
4	5.65843502399002\\
5	4.81863751377362\\
6	5.68623028774833\\
7	4.81180109313419\\
8	4.74043818874274\\
9	4.29323401342511\\
10	3.99898548172286\\
11	4.6517738255519\\
12	4.32104402184159\\
13	5.25015272099351\\
14	3.94635999365848\\
15	5.79975334927256\\
16	4.22810057775898\\
17	5.38074141242321\\
18	5.43249056656129\\
19	5.30938496063645\\
20	5.59410344533436\\
21	5.19726828273975\\
22	5.71676532864453\\
23	3.97282523672787\\
24	5.26374554039704\\
25	5.11503054690988\\
26	4.20933999586243\\
27	4.93345755935087\\
28	5.8176162389542\\
29	3.966043305115\\
30	5.9009258184725\\
31	5.27832187145672\\
32	5.02747827215063\\
33	3.63688179941014\\
34	5.48323105513329\\
35	5.63862756316816\\
36	5.63400153031848\\
37	3.48213432543887\\
38	4.99642960135533\\
39	4.64045548478617\\
40	4.83310343480023\\
41	4.55885716544491\\
42	5.19736026770626\\
43	5.94787276531657\\
44	5.58530228322954\\
45	4.19401716604369\\
46	3.60753732999577\\
47	5.85053049140317\\
48	4.70388476048018\\
49	3.49349081986847\\
50	4.75732099144097\\
51	4.53380581021066\\
52	4.91831543618193\\
53	4.97438597165385\\
54	5.31915318970044\\
55	5.04382749959478\\
56	3.39966849216811\\
57	3.73400872888359\\
58	3.79077154534487\\
59	4.80477831256683\\
60	4.2427202865809\\
61	4.86874555654823\\
62	4.208783611175\\
63	5.14260125131299\\
64	5.52443149516112\\
65	5.27790384995227\\
66	4.97427078122368\\
67	5.18499168291726\\
68	3.76723799467569\\
69	4.70545906442706\\
70	5.14663113079822\\
71	4.52681700694493\\
72	4.23586301672142\\
73	5.70966914958969\\
74	5.86433780572314\\
75	3.8928014428801\\
76	4.13603041357954\\
77	4.45723542592839\\
78	5.50419596914491\\
79	4.80382498582509\\
80	5.00115145290249\\
81	5.54774294471685\\
82	5.04829194202732\\
83	5.08021649655318\\
84	5.58429158137275\\
85	5.52297345729335\\
86	4.03470872019939\\
87	5.91548760305469\\
88	6.07963289526433\\
89	5.33807756018774\\
90	4.09598446034846\\
91	4.70982852919781\\
92	5.50673278471539\\
93	3.38818723051338\\
94	3.97053470132786\\
95	5.88209523050184\\
96	4.34553491594213\\
97	4.45863874962699\\
98	4.03808219666932\\
99	4.49280916038439\\
100	5.13136159617432\\
};
\addlegendentry{Nearest Sparse Reversible}

\addplot [color=gray, line width=2.0pt]
  table[row sep=crcr]{%
1	18.0777348590835\\
2	19.2732780978993\\
3	15.9896192718463\\
4	18.9654497461247\\
5	16.1818534098997\\
6	18.52688053851\\
7	15.5205085877175\\
8	16.2605042956103\\
9	14.1107260880371\\
10	12.9843830376508\\
11	15.1527703959497\\
12	14.3284937094218\\
13	17.5611865673969\\
14	13.0880040247695\\
15	18.9465864696636\\
16	13.8950071366452\\
17	17.9902406161504\\
18	17.8588052475569\\
19	17.5156278502894\\
20	18.1782785768982\\
21	16.8019583800096\\
22	18.9202156172474\\
23	13.1596315815981\\
24	17.1780080647731\\
25	16.7786397134291\\
26	13.840591439629\\
27	16.1783278297151\\
28	19.2853392130198\\
29	13.3482179956547\\
30	19.3815084715505\\
31	17.2207682802025\\
32	16.7058214062463\\
33	11.6481374219864\\
34	18.3134320741365\\
35	18.4053962591986\\
36	18.5443958854245\\
37	11.2388216808996\\
38	16.9647007900434\\
39	15.2122347037813\\
40	15.6524490505585\\
41	15.0184287857413\\
42	17.2578102721369\\
43	19.5257498243469\\
44	18.6941146076061\\
45	13.5762834980468\\
46	12.0979427589769\\
47	19.5798521319121\\
48	15.1663536272402\\
49	11.8527870247846\\
50	15.6467360006813\\
51	14.9700417894958\\
52	15.8910834947241\\
53	16.9569790669604\\
54	17.8718051203675\\
55	16.4942217863631\\
56	11.0219779128462\\
57	11.7310032735341\\
58	12.3440187407171\\
59	15.8032309346972\\
60	13.6113226743422\\
61	16.2169188040585\\
62	14.2827450976225\\
63	16.94877787708\\
64	18.0762349024617\\
65	17.6232030364953\\
66	16.2968257724486\\
67	17.0492244195819\\
68	12.7238966849824\\
69	15.3961881015113\\
70	17.1312060027107\\
71	15.3541155732396\\
72	13.8853324120754\\
73	19.1736134192928\\
74	19.2085197937816\\
75	12.9703676507551\\
76	13.9538279726161\\
77	15.0110479529891\\
78	17.9926978697219\\
79	15.3690201483062\\
80	16.7434786899899\\
81	18.3629117320655\\
82	16.1938568258002\\
83	16.9731491070834\\
84	18.2838143337035\\
85	18.3354926211392\\
86	13.3803578812971\\
87	19.3723384568812\\
88	19.4847494928245\\
89	17.3961710586456\\
90	13.4176621746226\\
91	15.8350788913626\\
92	17.9324558370889\\
93	11.2252555138282\\
94	13.1496071412633\\
95	19.0976796679592\\
96	14.3433299216176\\
97	14.7934718425427\\
98	13.4881369912032\\
99	15.4357291252159\\
100	16.561714083897\\
};
\addlegendentry{Metropolis--Hastings}

\end{axis}
\end{tikzpicture}%
    \caption{Comparison of the norm of the perturbation $\Delta = X - P$ between the $X$ obtained from $P$ by applying Metropolis--Hastings reversibilization in~\eqref{eq:reversibilization}-\eqref{eq:reversibilization_metropolis}, and the nearest-sparse reversible obtained via our optimization procedure as in Algorithm~\ref{alg:summming_it_up} while employing the \texttt{gurobi} QP solver.\label{fig:norm_distance_to_metropolis}}
\end{figure}
In Figure~\ref{fig:norm_distance_to_metropolis}, we compare the Frobenius norm of the perturbation $\Delta$ (i.e., the distance to reversibility, with sparsity constraints) obtained through our optimization procedure with the distance at which the Metropolis–Hastings reversibilization (Equations~\eqref{eq:reversibilization}–\eqref{eq:reversibilization_metropolis}) of the original matrix sits. We observe that the Metropolis–Hastings approach produces a reversible matrix that is further apart. In terms of algorithmic performance, the two optimization methods exhibit notable differences across the evaluated metrics. In terms of computational efficiency, \texttt{gurobi} emerges as the clear winner, achieving solve times approximately three to four times faster than \texttt{guadprog}, with median times around $\SI{0.007}{\second}$ compared to \texttt{guadprog}'s $\SI{0.023}{\second}$. Concerning constraint satisfaction, the two solvers maintain excellent accuracy in enforcing the reversibility condition, with stationarity metrics showing negligible violations across all methods. However, the stochasticity constraint reveals a more pronounced difference: \texttt{gurobi} enforce this constraint with minimal violations at machine precision levels ($\sim 4.44 \times 10^{-16}$), while MATLAB's \texttt{guadprog} method exhibits larger violations. Overall, \texttt{gurobi} provides the best results.

To assess the scalability of the proposed approach, we examine the computational time as a function of the problem size. Fig.~\ref{fig:time-scaling} illustrates the solve time scaling behavior across test cases of varying dimensions. The horizontal axis represents the size of the Markov chain (i.e., the number of states), while the top axis indicates the number of nonzero entries in the original sparse transition matrix, reflecting the sparsity structure of each instance.
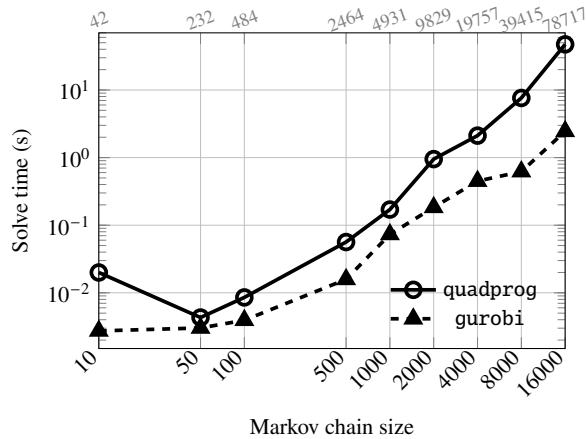
\begin{figure}[htbp]
    \sidecaption
    \begin{tikzpicture}

\begin{axis}[%
width=6.2cm,
height=4.2cm,
scale only axis,
xmode=log,
ymode=log,
xmin=10,
xmax=16000,
xtick={10,50,100,500,1000,2000,4000,8000,16000},
xticklabels={10,50,100,500,1000,2000,4000,8000,16000},
x tick label style={
    font=\small,
    rotate=45,
    anchor=east
},
xminorticks=true,
ymin=0.0015,
ymax=70,
yminorticks=true,
xlabel={Markov chain size},
ylabel={Solve time (s)},
axis background/.style={fill=white},
major grid style={line width=.3pt, draw=gray!50},
minor grid style={line width=.1pt, draw=gray!30},
xmajorgrids,
ymajorgrids,
legend style={
    at={(0.97,0.03)},
    anchor=south east,
    draw=none,
    fill=none,
    font=\small
},
tick label style={font=\small},
label style={font=\small},
clip=false
]

\addplot[
    black,
    solid,
    line width=1.4pt,
    mark=o,
    mark options={solid, fill=black},
    mark size=2.8pt
]
table{
10	0.019989
50	0.00431
100	0.00855
500	0.056364
1000	0.170433
2000	0.9496
4000	2.114131
8000	7.620537
16000	47.292649
};
\addlegendentry{\texttt{quadprog}}

\addplot[
    black,
    dashed,
    line width=1.4pt,
    mark=triangle*,
    mark options={solid, fill=black},
    mark size=3pt
]
table{
10	0.002732
50	0.003021
100	0.003931
500	0.015819
1000	0.07313
2000	0.183832
4000	0.448846
8000	0.620928
16000	2.448867
};
\addlegendentry{\texttt{gurobi}}

\node[font=\scriptsize, text=gray, rotate=20]
    at (axis cs:10,110) {42};
\node[font=\scriptsize, text=gray, rotate=20]
    at (axis cs:50,110) {232};
\node[font=\scriptsize, text=gray, rotate=20]
    at (axis cs:100,110) {484};
\node[font=\scriptsize, text=gray, rotate=20]
    at (axis cs:500,110) {2464};
\node[font=\scriptsize, text=gray, rotate=20]
    at (axis cs:1000,110) {4931};
\node[font=\scriptsize, text=gray, rotate=20]
    at (axis cs:2000,110) {9829};
\node[font=\scriptsize, text=gray, rotate=20]
    at (axis cs:4000,110) {19757};
\node[font=\scriptsize, text=gray, rotate=20]
    at (axis cs:8000,110) {39415};
\node[font=\scriptsize, text=gray, rotate=20]
    at (axis cs:16000,110) {78717};

\end{axis}
\end{tikzpicture}
    \caption{Solve time as a function of the Markov chain size (bottom axis) and the number of nonzero entries in the original sparse transition matrix (top axis). The two optimization algorithms demonstrate similar scaling behavior, with computational cost increasing with both the problem dimension and the sparsity level.}
    \label{fig:time-scaling}
\end{figure}
The results reveal that both algorithms exhibit subquadratic scaling with respect to the problem size, which is favorable for large-scale applications. \texttt{gurobi} consistently outperforms MATLAB's \texttt{quadrprog} across all problem sizes, maintaining its computational advantage even as the dimension increases. The correlation between solve time and the number of nonzero entries underscores the importance of sparsity in the original transition matrix: denser matrices lead to more complex optimization problems with a larger number of active constraints, consequently requiring longer solution times. For small to moderately sized problems (up to approximately $100$ states with a few hundred nonzero entries), both methods deliver solutions in well under one second, demonstrating the viability of the approach for real-time or near-real-time applications in moderate-dimensional settings.

\subsubsection{Comparison with the Riemannian approach for dense Markov Chains}

To complete the benchmark, we compare Alg.~\ref{alg:summming_it_up}---employing \texttt{gurobi}---with the Riemannian algorithm proposed in~\cite{durastante2025riemannianoptimizationapproachfinding} for computing the nearest reversible matrix without structural constraints. In Fig.~\ref{fig:comparison_riemannian}, the left panel displays the sparsity pattern of the transition matrix of the original Markov chain, which has size $251 \times 251$ with $1240$ nonzero entries. The matrix is generated according to the same procedure described in Section~\ref{subsec:testcase_generation}.
\begin{figure}[htb]
    \centering
    \input{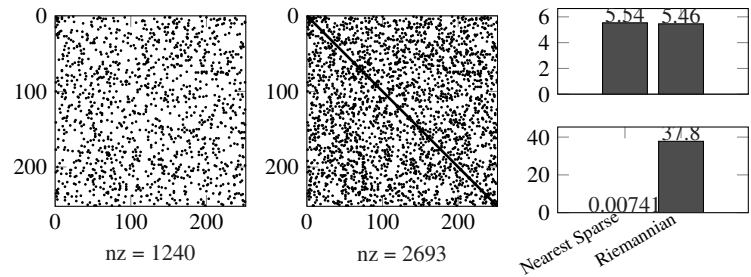}
    \caption{Comparison with the Riemannian solver. \emph{Left:} sparsity pattern of the transition matrix of the initial Markov chain. \emph{Center:} sparsity pattern of the nearest sparse reversible transition matrix obtained with Alg.~\ref{alg:summming_it_up}. \emph{Right:} the Riemannian solver produces a dense matrix (not shown as a sparsity pattern). The top bar plot reports the perturbation norm, while the bottom bar plot shows the computational time required, respectively, by the approach here proposed (Sparse Nearest) and by the Riemannian solver.}
    \label{fig:comparison_riemannian}
\end{figure}
The central panel shows the sparsity pattern of the \emph{nearest sparse reversible chain} computed with Alg.~\ref{alg:summming_it_up}. The number of nonzero entries increases to $2693$, which is comparable to the result reported in Fig.~\ref{fig:nnz_manufactured1}. By contrast, the Riemannian solver produces a dense matrix by construction.

We next compare the norm of the perturbations, reported in the top bar plot. The obtained values are approximately $5.54$ for the \emph{sparse} solver and $5.46$ for the Riemannian solver. This ordering is expected, since the Riemannian approach allows modifications on all entries, thus enjoying greater flexibility. Nevertheless, restricting the admissible nonzero pattern does not significantly deteriorate the achieved norm.

Finally, we observe that the reduced number of nonzero entries makes the sparse solver substantially faster: it computes the solution in $7.41 \times 10^{-2}$ seconds, compared to $37.8$ seconds required by the Riemannian solver.

\subsection{Direct estimation of Markov chains from transition counts}\label{sec:transition_counts}

We consider a system governed by the Langevin stochastic differential equation~\cite{PhysRevE.93.033307}
\begin{equation}\label{eq:sde}
    \dot{x} = -\frac{\partial U(x)}{\partial x} + \xi(t), \qquad x \in \mathbb{R},
\end{equation}
where $U(x)$ is a potential and $\xi(t)$ denotes Gaussian white noise with covariance 
$\langle \xi(t)\,\xi(s) \rangle = \sigma^2 \delta(t-s)$.  
For numerical simulations, the Euler--Maruyama scheme~\cite{Kloeden1992,MR1872387} is employed, yielding
\begin{equation}\label{eq:Euler--Maruyama}
    x(t+\Delta t) = x(t) - \frac{\partial U(x(t))}{\partial x}\,\Delta t 
    + \sigma \sqrt{\Delta t}\,\eta(t),
\end{equation}
with $\eta(t)$ independent standard Gaussian variables.  

The resulting trajectory $\{x(t_n)\}_{n=0}^N$ is coarse-grained by partitioning the state space into $B$ disjoint sets 
$\{\Theta_i\}_{i=1}^B$, which define the states of a Markov model.  
From the trajectory one forms the count matrix $C$, where $C_{ij}$ denotes the number of transitions from $\Theta_i$ to $\Theta_j$.  
The transition probability matrix $P$ is then estimated via row normalization,
\[
P_{ij} = \frac{C_{ij}}{\sum_{k=1}^B C_{ik}}, \qquad P \mathbf{1} = \mathbf{1}.
\]
Although the underlying Langevin dynamics are reversible, the finite-time discretization and sampling noise generally yield an estimator $P$ that violates reversibility as seen in Definition~\ref{def:reversibility}.  
Reversibility is typically restored by post-processing, e.g., through reversibilization projections~\cite{Choi_Wolfer} or constrained optimization~\cite{durastante2025riemannianoptimizationapproachfinding,MR3338930}.

\subsubsection{An example of Markov chains constructed from the counting matrix}

As a concrete example of the Langevin dynamics introduced in~\eqref{eq:sde}--\eqref{eq:Euler--Maruyama}, 
we consider the torsional motion of the butane molecule, modeled by the potential
\[
U(x) = a + b \cos(x) + c \cos^2(x) + d \cos^3(x),
\]
which provides an effective approximation of the butane dihedral energy profile~\cite{MR3338930}. 
Here, \(x\) denotes the central dihedral angle, and the parameters are set to 
\(a = 2.0567\), \(b = -4.0567\), \(c = 0.3133\), and \(d = 6.4267\). 
Since \(U(x)\) is periodic with period \(2\pi\), we restrict the state space to the interval \([0,2\pi]\), 
which we partition into \(B=30\) disjoint subsets.
The Langevin dynamics are then integrated using the Euler--Maruyama scheme~\eqref{eq:Euler--Maruyama}, 
with step size \(\Delta t = 10^{-3}\), noise intensity \(\sigma = 1\), 
and total number of steps \(N = 5 \times 10^8\). 
The resulting discrete trajectory is subsequently coarse-grained by the partition, 
yielding the state sequence used for the construction of the Markov model as described in the previous Section~\ref{sec:transition_counts}.

We generate a transition matrix $P$ from the count matrix corresponding to this configuration and apply the proposed optimization strategy to recover a reversible approximation. The resulting matrix $P$ has no transient states and consists of a single ergodic class, so the reversible approximation can be computed directly on the full state space. We then compute the nearest reversible matrix according to our formulations employing the \lstinline{'gurobi'} QP algorithm and summarize the results in Fig.~\ref{fig:butane_example}.
\begin{figure}[htb]
    \centering
    \definecolor{mycolor1}{rgb}{0.12941,0.12941,0.12941}%
\begin{tikzpicture}

\begin{axis}[%
width=0.182\columnwidth,
height=0.182\columnwidth,
at={(0\columnwidth,0\columnwidth)},
scale only axis,
xmin=0,
xmax=31,
xlabel style={font=\color{mycolor1}},
xlabel={nz = 90},
y dir=reverse,
ymin=0,
ymax=31,
axis background/.style={fill=white},
title style={font=\bfseries\color{mycolor1}\footnotesize},
title={$P$}
]
\addplot [color=black, only marks, mark size=0.6pt, mark=*, mark options={solid, black}, forget plot]
  table[row sep=crcr]{%
1	1\\
1	2\\
1	30\\
2	1\\
2	2\\
2	3\\
3	2\\
3	3\\
3	4\\
4	3\\
4	4\\
4	5\\
5	4\\
5	5\\
5	6\\
6	5\\
6	6\\
6	7\\
7	6\\
7	7\\
7	8\\
8	7\\
8	8\\
8	9\\
9	8\\
9	9\\
9	10\\
10	9\\
10	10\\
10	11\\
11	10\\
11	11\\
11	12\\
12	11\\
12	12\\
12	13\\
13	12\\
13	13\\
13	14\\
14	13\\
14	14\\
14	15\\
15	14\\
15	15\\
15	16\\
16	15\\
16	16\\
16	17\\
17	16\\
17	17\\
17	18\\
18	17\\
18	18\\
18	19\\
19	18\\
19	19\\
19	20\\
20	19\\
20	20\\
20	21\\
21	20\\
21	21\\
21	22\\
22	21\\
22	22\\
22	23\\
23	22\\
23	23\\
23	24\\
24	23\\
24	24\\
24	25\\
25	24\\
25	25\\
25	26\\
26	25\\
26	26\\
26	27\\
27	26\\
27	27\\
27	28\\
28	27\\
28	28\\
28	29\\
29	28\\
29	29\\
29	30\\
30	1\\
30	29\\
30	30\\
};
\end{axis}

\begin{axis}[%
width=0.182\columnwidth,
height=0.182\columnwidth,
at={(0.239\columnwidth,0\columnwidth)},
scale only axis,
xmin=0,
xmax=31,
xlabel style={font=\color{mycolor1}},
xlabel={nz = 90},
y dir=reverse,
ymin=0,
ymax=31,
axis background/.style={fill=white},
title style={font=\bfseries\color{mycolor1}},
title={$\mathbb{S}(P + P^\top + I)$}
]
\addplot [color=black, only marks, mark size=0.6pt, mark=*, mark options={solid, black}, forget plot]
  table[row sep=crcr]{%
1	1\\
1	2\\
1	30\\
2	1\\
2	2\\
2	3\\
3	2\\
3	3\\
3	4\\
4	3\\
4	4\\
4	5\\
5	4\\
5	5\\
5	6\\
6	5\\
6	6\\
6	7\\
7	6\\
7	7\\
7	8\\
8	7\\
8	8\\
8	9\\
9	8\\
9	9\\
9	10\\
10	9\\
10	10\\
10	11\\
11	10\\
11	11\\
11	12\\
12	11\\
12	12\\
12	13\\
13	12\\
13	13\\
13	14\\
14	13\\
14	14\\
14	15\\
15	14\\
15	15\\
15	16\\
16	15\\
16	16\\
16	17\\
17	16\\
17	17\\
17	18\\
18	17\\
18	18\\
18	19\\
19	18\\
19	19\\
19	20\\
20	19\\
20	20\\
20	21\\
21	20\\
21	21\\
21	22\\
22	21\\
22	22\\
22	23\\
23	22\\
23	23\\
23	24\\
24	23\\
24	24\\
24	25\\
25	24\\
25	25\\
25	26\\
26	25\\
26	26\\
26	27\\
27	26\\
27	27\\
27	28\\
28	27\\
28	28\\
28	29\\
29	28\\
29	29\\
29	30\\
30	1\\
30	29\\
30	30\\
};
\end{axis}

\begin{axis}[%
width=0.182\columnwidth,
height=0.182\columnwidth,
at={(0.479\columnwidth,0\columnwidth)},
scale only axis,
xmin=0,
xmax=31,
xlabel style={font=\color{mycolor1}},
xlabel={nz = 90},
y dir=reverse,
ymin=0,
ymax=31,
axis background/.style={fill=white},
title style={font=\bfseries\color{mycolor1}},
title={$\Delta$}
]
\addplot [color=black, only marks, mark size=0.6pt, mark=*, mark options={solid, black}, forget plot]
  table[row sep=crcr]{%
1	1\\
1	2\\
1	30\\
2	1\\
2	2\\
2	3\\
3	2\\
3	3\\
3	4\\
4	3\\
4	4\\
4	5\\
5	4\\
5	5\\
5	6\\
6	5\\
6	6\\
6	7\\
7	6\\
7	7\\
7	8\\
8	7\\
8	8\\
8	9\\
9	8\\
9	9\\
9	10\\
10	9\\
10	10\\
10	11\\
11	10\\
11	11\\
11	12\\
12	11\\
12	12\\
12	13\\
13	12\\
13	13\\
13	14\\
14	13\\
14	14\\
14	15\\
15	14\\
15	15\\
15	16\\
16	15\\
16	16\\
16	17\\
17	16\\
17	17\\
17	18\\
18	17\\
18	18\\
18	19\\
19	18\\
19	19\\
19	20\\
20	19\\
20	20\\
20	21\\
21	20\\
21	21\\
21	22\\
22	21\\
22	22\\
22	23\\
23	22\\
23	23\\
23	24\\
24	23\\
24	24\\
24	25\\
25	24\\
25	25\\
25	26\\
26	25\\
26	26\\
26	27\\
27	26\\
27	27\\
27	28\\
28	27\\
28	28\\
28	29\\
29	28\\
29	29\\
29	30\\
30	1\\
30	29\\
30	30\\
};
\end{axis}

\begin{axis}[%
width=0.182\columnwidth,
height=0.182\columnwidth,
at={(0.718\columnwidth,0\columnwidth)},
scale only axis,
xmin=0,
xmax=31,
xlabel style={font=\color{mycolor1}},
xlabel={Positive 60 - Negative 30},
y dir=reverse,
ymin=0,
ymax=31,
axis background/.style={fill=white},
axis x line*=bottom,
axis y line*=left
]
\addplot [color=black, only marks, mark=+, mark options={solid, black}, mark size=0.9pt, forget plot]
  table[row sep=crcr]{%
1	1\\
1	30\\
2	1\\
2	2\\
3	2\\
3	3\\
4	3\\
4	4\\
5	4\\
5	5\\
6	5\\
7	6\\
8	7\\
9	8\\
10	9\\
11	10\\
11	11\\
12	11\\
12	12\\
13	12\\
13	13\\
14	13\\
14	14\\
15	14\\
15	15\\
16	15\\
17	16\\
18	17\\
19	18\\
20	19\\
21	20\\
21	21\\
22	21\\
22	22\\
23	22\\
23	23\\
24	23\\
24	24\\
25	24\\
25	25\\
26	25\\
27	26\\
28	27\\
29	28\\
30	29\\
};
\addplot [color=gray, only marks, mark=-, mark options={solid, black}, mark size=0.6pt, forget plot]
  table[row sep=crcr]{%
1	2\\
2	3\\
3	4\\
4	5\\
5	6\\
6	6\\
6	7\\
7	7\\
7	8\\
8	8\\
8	9\\
9	9\\
9	10\\
10	10\\
10	11\\
11	12\\
12	13\\
13	14\\
14	15\\
15	16\\
16	16\\
16	17\\
17	17\\
17	18\\
18	18\\
18	19\\
19	19\\
19	20\\
20	20\\
20	21\\
21	22\\
22	23\\
23	24\\
24	25\\
25	26\\
26	26\\
26	27\\
27	27\\
27	28\\
28	28\\
28	29\\
29	29\\
29	30\\
30	1\\
30	30\\
};
\end{axis}

\end{tikzpicture}%
    \caption{Reversibilization of the transition matrix $P$ obtained from the count matrix of the butane configuration. 
    The four panels, from left to right, show: (i) the adjacency matrix of the original Markov chain $P$; 
    (ii) the admissible modification pattern $\mathbb{S}(P + P^\top + I)$; 
    (iii) the sparsity pattern of the perturbation $\Delta$ obtained from the Nearest Sparse Reversible problem; (iv) the sign of the modification.}
    \label{fig:butane_example}
\end{figure}
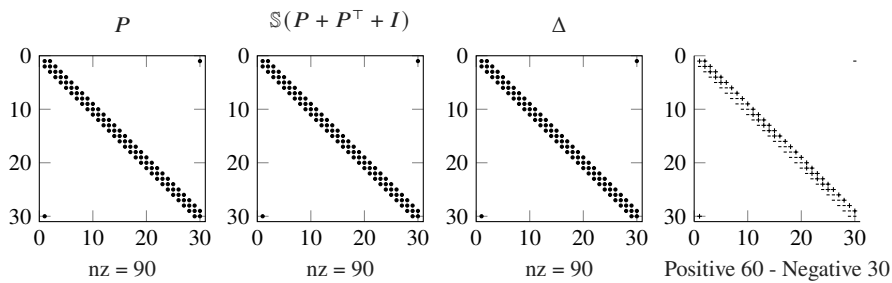
The norm of the modification is $\|\Delta\|_F \approx 0.021858$, which is inferior to the norm of the perturbation with respect to the Metropolis--Hastings reversibilization in~\eqref{eq:reversibilization}-\eqref{eq:reversibilization_metropolis} which sits at a distance $\approx 0.057561$. Moreover, the constraint residuals satisfy $\|(P+\Delta)\mathbf{1}-\mathbf{1}\|_\infty \approx 2.2204\times 10^{-16}$, $\|D_{\boldsymbol{\pi}}(P+\Delta)-(P+\Delta)^\top D_{\boldsymbol{\pi}}\|_\infty \approx 2.1684\times 10^{-18}$, and $\|\boldsymbol{\pi}^\top(P+\Delta)-\boldsymbol{\pi}^\top\|_\infty \approx 5.5511\times 10^{-17}$. These values indicate that the computed solution satisfies all constraints to machine precision. %
The modification redistributes the weights within the existing symmetric sparsity pattern. In particular, most diagonal entries and the lower first off-diagonal entries $(i,i-1)$ are increased (marked by ``$+$''), whereas the upper first off-diagonal entries $(i,i+1)$ are predominantly decreased (marked by ``$-$''). In addition, the corner entry $(1,30)$ is increased, while its symmetric counterpart $(30,1)$ is decreased. Overall, the sparsity pattern remains unchanged, but the mass is shifted from the upper diagonal toward the main and lower diagonals.

\subsubsection{A more elaborate approach using a Markov State Model}

We consider here a more elaborate construction than the simple estimate of the Markov chain obtained from the count matrix approach described in Section~\ref{sec:transition_counts}. As a test system, we considered a dataset~\cite{McGibbon2014} for Fs-peptide, a synthetic 21-residue peptide (Ace-A\textsubscript{5}-(AAARA)\textsubscript{3}-A-Nme) that is widely used as a minimal model for studying $\alpha$-helical folding\footnote{Each trajectory in the simulated data for this protein system is 500 \si{\nano\second} in length, and saved at a 50 \si{\pico\second} time interval via 14 \si{\micro\second} aggegrate sampling. The simulations were performed using the AMBER99SB-ILDN force field with GBSA-OBC implicit solvent at 300\si{\kelvin}, starting from randomly
sampled conformations from an initial 400\si{\kelvin} unfolding simulation. The simulation in the dataset have been run with the OpenMM (\href{https://openmm.org/}{openmm.org}) software.}. Its relatively small size, combined with rich conformational dynamics on the nanosecond to microsecond timescale, makes it an ideal benchmark for evaluating kinetic models. The construction described below follows closely the Fs-peptide example provided in the MSMBuilder library\footnote{See the example \href{http://msmbuilder.org/3.8.0/examples/Fs-Peptide-in-RAM.html}{http://msmbuilder.org/3.8.0/examples/Fs-Peptide-in-RAM.html}.}, which we reproduce here in order to illustrate how we arrive at a non-reversible Markov chain from the elaboration of a dynamical model.

The procedure consists of \emph{featurization}, \emph{dimensionality reduction}, \emph{clustering}, and \emph{estimation of the transition matrix}. First, the raw molecular dynamics trajectories, given as atomic coordinates $\{\mathbf{r}_t\}_{t=0}^T $, were transformed into an internal coordinate representation---i.e., a \emph{featurizer} is used which transforms Cartesian coordinates into a most suitable coordinate systems. In particular, we extracted the set of backbone dihedral angles $(\phi, \psi)$, producing a sequence of feature vectors $\mathbf{x}_t \in \mathbb{R}^d$, where $d$ is the number of features; this reduces the space from the original $264 \times 3$-dimensional space to $d = 84$ dimensions. Dihedral angles are then centered and scaled our by their respective interquartile ranges to reduce bias in the data.

Next, we applied time-structure independent component analysis (tICA)~\cite{doi:10.1021/acs.jctc.7b00182,PhysRevLett.72.3634} to identify the slow collective motions. Intuitively, tICA can be understood as a time-aware version of principal component analysis (PCA). While PCA finds directions of maximum variance, tICA finds directions along which the system evolves as slowly as possible, i.e., directions that maximize time-lagged correlations. In matrix terms, let $C_0 = \mathbb{E}[\mathbf{x}_t \mathbf{x}_t^\top]$ be the covariance matrix and $C_\tau = \mathbb{E}[\mathbf{x}_t \mathbf{x}_{t+\tau}^\top]$ the time-lagged covariance at lag time $\tau$. The tICA components are obtained by solving the generalized eigenvalue problem
\[
C_\tau \mathbf{w} = \lambda C_0 \mathbf{w},
\]
where $\lambda$ measures the autocorrelation of the signal projected along direction $\mathbf{w}$. The dominant eigenvectors $\{\mathbf{w}_i\}$ define linear projections $\mathbf{y}_t = W^\top \mathbf{x}_t$ that emphasize the slowest modes of the peptide’s dynamics. In our application this reduces the number of dimensions from $84$ to $4$; Fig.~\ref{fig:molecular_dynamic_examples_a} shows an histogram of the reduced data projected along the two slowest degrees of freedom as identified by the tICA.
\begin{figure}[htbp]
    \centering
    \subfloat[\label{fig:molecular_dynamic_examples_a}]{\includegraphics[width=0.45\columnwidth]{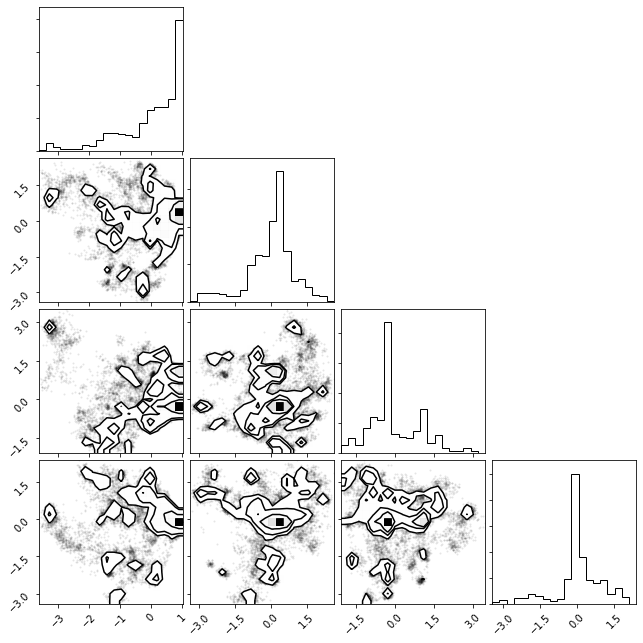}}\hfil
    \subfloat[\label{fig:molecular_dynamic_examples_b}]{\includegraphics[width=0.45\columnwidth]{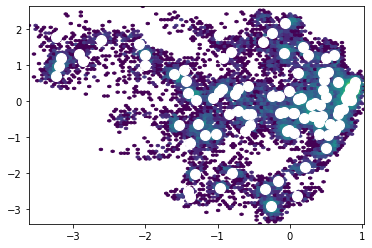}}
    \caption{Illustration of the MSM construction for the Fs-peptide system. Panel~\ref{fig:molecular_dynamic_examples_a} reports the projection of the trajectories onto the first two tICA components, and is followed by panel~\ref{fig:molecular_dynamic_examples_b} which gives discretization into clusters/microstates obtained by using MiniBatch $k$-Means~\cite{10.1145/1772690.1772862}---cluster centers are represtend as large white dots. These clusters serve as the discrete states on which the Markov State Model is estimated.}
    \label{fig:molecular_dynamic_examples}
\end{figure}
The reduced coordinates $\{\mathbf{y}_t\}$ were then discretized by clustering with MiniBatch $k$-Means~\cite{10.1145/1772690.1772862}. Each MD frame was assigned to the nearest cluster center, producing a sequence of discrete microstate labels $s_t \in \{1,\dots,K\}$, where $K$ is the number of clusters as depicted in Fig.~\ref{fig:molecular_dynamic_examples_b}. From these microstate trajectories we estimated the transition probability matrix $P(\tau)$ at lag time $\tau$ by counting transitions:
\begin{equation}\label{eq:t_peptide}
    P_{ij}(\tau) = \frac{C_{ij}(\tau)}{\sum_j C_{ij}(\tau)}, \qquad P \mathbf{1} = \mathbf{1},
\end{equation}
where $C_{ij}(\tau)$ is the number of observed transitions from state $i$ to $j$ separated by $\tau$ time units.

We now apply the proposed optimization strategy to compute the nearest reversible transition matrix to the estimate $P$ obtained from~\eqref{eq:t_peptide} using the procedure described above. The estimated matrix $P$ consists of a single ergodic class and has no transient states; consequently, the optimization is performed over the full state space. The corresponding results are shown in Fig.~\ref{fig:reversiblized_peptide}.

\begin{figure}[htbp]
    \centering
    \input{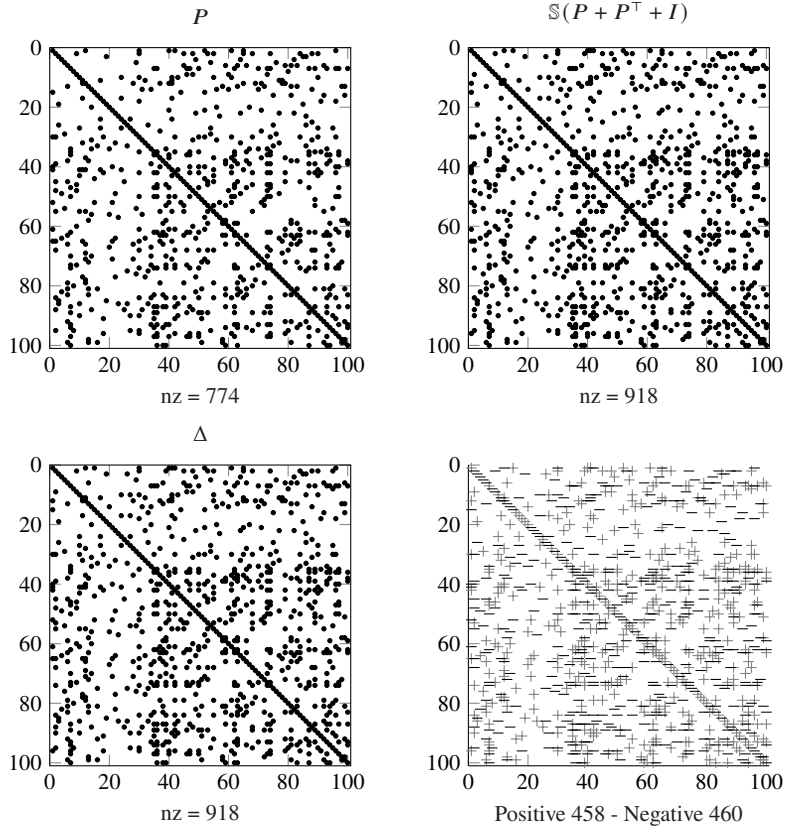}
    \caption{Reversibilization of the transition matrix $P$ from~\eqref{eq:t_peptide}. The four panels, from left to right, show: (i) the sparsity pattern of the original matrix $P$, which is not symmetric; (ii) the admissible modification pattern $\mathbb{S}(P + P^\top + I)$; (iii) the sparsity pattern of the perturbation $\Delta$; (iv) sign of the obtained $\Delta$ matrix.}
    \label{fig:reversiblized_peptide}
\end{figure}

The perturbation modifies all entries permitted by the admissible pattern, resulting in $\operatorname{nnz}(\Delta) = 918$. The Frobenius norm of $\Delta$ is approximately $0.134822$, which is approximately five times smaller than the distance $\approx 0.653852$ achieved by the Metropolis-Hastings reversibilization~\eqref{eq:reversibilization}-\eqref{eq:reversibilization_metropolis}. The optimization was carried out using the \lstinline{'gurobi'} solver option, with solution times of $0.0036$~\si{\second}. The constraint residuals satisfy $\|(P+\Delta)\mathbf{1}-\mathbf{1}\|_\infty \approx 4.440892\times 10^{-16}$, $\|D_{\boldsymbol{\pi}}(P+\Delta)-(P+\Delta)^\top D_{\boldsymbol{\pi}}\|_\infty \approx 2.775558\times 10^{-17}$, and $\|\boldsymbol{\pi}^\top(P+\Delta)-\boldsymbol{\pi}^\top\|_\infty \approx 1.396265\times 10^{-17}$.

\section{Conclusions}\label{sec:conclusions}

In this work, we have developed a comprehensive optimization framework for computing the nearest reversible sparse Markov chain to a given non-reversible matrix. The reversibility constraint is expressed through the symmetry condition with respect to the stationary distribution, and we have embedded this constraint into the optimization problem. A key theoretical contribution is the proof that the underlying optimization problem is strongly convex, ensuring the existence and uniqueness of the solution and, crucially, its independence from the choice of optimization algorithm. This theoretical property provides robustness and consistency across different computational approaches. Through comparative analysis, we evaluated two state-of-the-art optimization algorithms---\texttt{quadprog} and \texttt{gurobi}---demonstrating their practical viability for both small and moderate-scale problems. Our experiments reveal that \texttt{gurobi} provides superior computational performance, achieving solve times approximately 3--4 times faster than \texttt{guadprog}, while all methods satisfy the reversibility and stationarity constraints to machine precision. The subquadratic scaling of computational cost with problem dimension underscores the scalability of the approach.

Extensive numerical experiments on synthetic sparse matrices and real-world examples from molecular dynamics simulations validate the proposed methodology. The results demonstrate that reversibility introduces a systematic and quantifiable increase in sparsity, an intrinsic consequence of the coupling constraints between forward and backward transitions. This fundamental trade-off is independent of the specific problem instance and algorithmic choice, suggesting that the sparsity-reversibility relationship reflects a deep structural property of Markov chains. Furthermore, the framework successfully processes transition matrices obtained from empirical data (transition counts) and recovers reversible matrices with high constraint satisfaction. We plan to evaluate the usage of regularized interior point methods from~\cite{MR4594481,MR4784765}, to further accelerate the solution and increase the robustness of the method{, and to investigate the case where we also add the topology of the underlying graph to the variables to be optimized; see again Remark~\ref{rmk:miqp_formulation}.}

\begin{acknowledgement}
This work was partially supported by the Italian Ministry of University and Research (MUR) through the PRIN 2022 ``Low-rank Structures and Numerical Methods in Matrix and Tensor Computations and their Application'' code: 20227PCCKZ MUR D.D. financing decree n. 104 of February 2nd, 2022 (CUP I53D23002280006) and through the PRIN 2022 ``MOLE: Manifold constrained Optimization and LEarning'', code: 2022ZK5ME7 MUR D.D. financing decree n. 20428 of November 6th, 2024 (CUP B53C24006410006). The last three authors are member of the INdAM GNCS and acknowledge the MUR Excellence Department Project awarded to the Department of Mathematics, University of Pisa, CUP I57G22000700001. Furthermore, during part
of the preparation of this work, MG was affiliated with the Department of Mathematics, University of Pisa, Pisa (Italy).

We are grateful to the reviewers for their constructive feedback, which allowed us to improve the presentation of our findings.
\end{acknowledgement}

\ethics{Competing Interests}{The authors have no conflicts of interest to declare that are relevant to the content of this chapter.}

\bibliographystyle{spmpsci}
\bibliography{biblio}

\end{document}